\theoremstyle{plain}
\newtheorem{theorem}{Theorem}
\newtheorem{definition}{Definition}
\newtheorem{corollary}{Corollary}
\theoremstyle{remark}
\newtheorem{remark}{Remark}
\numberwithin{equation}{section}
\numberwithin{definition}{section}
\begin{document}

\title[Topological Equivalence Nonautonomous difference equations]{Topological Equivalence of nonautonomous difference equations with a family of dichotomies on the half line}
\author[Casta\~neda]{\'Alvaro Casta\~neda}
\author[Gonz\'alez]{Pablo Gonz\'alez}
\author[Robledo]{Gonzalo Robledo}
\address{Departamento de Matem\'aticas, Facultad de Ciencias, Universidad de
  Chile, Casilla 653, Santiago, Chile}
\address{Facultad de Ingenier\'ia y Ciencias, Universidad Adolfo Iba\~nez, Pe\~nalol\'en, Santiago--Chile}  
\email{castaneda@uchile.cl}
\email{pablo.gonzalez.l@uai.cl}
\email{grobledo@uchile.cl}
\thanks{This work was supported by FONDECYT Regular 1170968}
\subjclass{39A06,34D09}
\keywords{Difference equations, Topological Equivalence, Dichotomies}
\date{January 2020}

\begin{abstract}

A linear system of difference equations and a nonlinear perturbation are considered,
we obtain sufficient conditions to ensure the topological equivalence between them, namely,
the linear part satisfies a property of dichotomy on the positive half--line while the nonlinearity has some boundedness and Lipschitzness conditions. As a consequence, we study the asymptotical stability and its preservation by topological equivalence
\end{abstract}

\maketitle
\section{Introduction}
\subsection{Preliminaries} The linearization of flows arising from autonomous ordinary differential equations and  autonomous difference equations has a long history starting with the 
classical Hartman--Grobman Theorem \cite{Hartman60,Grobman}, which ensures the existence of a local homeomorfism 
between a nonlinear flow and its linearization around a fixed point, provided
that a hyperbolicity condition on the corresponding linearized flow is verified. The reader is refered to \cite{Palis,Pugh,Reinfelds1972} for an in depth look to the global case or an abstract setting.

The extension of the above results to the nonautonomous framework have dealed with 
the property of dichotomy \cite{Coffman,Coppel} which mimics some qualitative properties
of the hyperbolicity condition, namely, the existence of stable and unstable directions of a linear system; this fact has been useful to develop some local \cite{Dragicevic,Lin} and global linearization results. 

To the best of our knowledge, the global and nonautonomous linearization results started with 
the work of K.J. Palmer in \cite{Palmer}, which considered two systems of ordinary differential equations: a linear one and a nonlinear perturbation. Under the assumption that the linear system satisfies a uniform exponential dichotomy property \cite{Coppel} and meanwhile the nonlinear perturbation verifies some Lipschitzness and boundedness assumptions, it is proven that both systems are topologically equivalent; property that will be explained in full later on in this paper.

In order to obtain a discrete version of the Palmer's result, let us consider the non\-autonomous systems of difference equations
\begin{eqnarray}
\label{lineal}
\hspace{-1.6cm}x_{k+1}&=&A(k)x_{k},\quad k\in \mathbb{Z}^{+},
\end{eqnarray}
\begin{eqnarray}
\label{linealsystem}
y_{k+1}&=&A(k)y_{k}+f(k,y_{k}),\quad k\in \mathbb{Z}^{+}, \label{nolinealsystem}
\end{eqnarray}
where $x_{k}$ and $y_{k}$ are column vectors of $\mathbb{R}^{d}$ for any $k\in \mathbb{Z}^{+}:=\{0,1,2,\ldots\}$, the matrix function $k\mapsto A(k)\in M_{d}(\mathbb{R})$ is non singular and $f:\mathbb{Z}^{+}\times \mathbb{R}^{d}\to \mathbb{R}^{d}$ is continuous in $\mathbb{R}^{d}$.  

The purpose of this article is to obtain a set of conditions ensuring that the above systems are
topologically equivalent, this property was introduced in the continuous framework by K.J. Palmer in \cite{Palmer} and extended to the discrete case by several authors such as G. Papaschinopoulos and J. Schinas in \cite{Papas87,Schinas} who stated as follows: 
\begin{definition}
\label{TopEq}
Let $J\subseteq \mathbb{Z}$. The systems \textnormal{(\ref{lineal})} and \textnormal{(\ref{nolinealsystem})} are $J$--topologically e\-qui\-valent if there exists a function such as $H\colon J \times \mathbb{R}^{d}\to \mathbb{R}^{d}$ with the properties
\begin{itemize}
\item[(i)] If $x(k)$ is a solution of \textnormal{(\ref{lineal})}, then $H[k,x(k)]$ is a solution 
of \textnormal{(\ref{nolinealsystem})},
\item[(ii)] $H(k,u)-u$ is bounded in $J \times \mathbb{R}^{d}$,
\item[(iii)]  For each fixed $k\in J$, $u\mapsto H(k,u)$ is an homeomorphism of $\mathbb{R}^{d}$.
\end{itemize}
In addition, the function $u\mapsto G(k,u)=H^{-1}(k,u)$ has properties \textnormal{(ii)--(iii)} and maps solutions of \textnormal{(\ref{nolinealsystem})} into solutions of \textnormal{(\ref{lineal})}.
\end{definition}

The property of topological equivalence has several differences with the linearization arising from the classical Hartman--Grobman's theorem: i) it is inserted in a nonautonomous framework and there is not an univocal equivalent to the hyperbolicity condition, ii) it deals with a global linearization instead of a local one, iii) an explicit construction of the homeomorphisms is possible in some cases, iv) the smoothness properties are considerably less studied, v) a corresponding version of the resonance's condition is far from being completed.

The $\mathbb{Z}$--topological equivalence between (\ref{lineal}) and (\ref{nolinealsystem}) has been studied in several works inspired in the Palmer's approach. First of all, A. Reinfelds in \cite{Reinfelds1997,Reinfelds} obtained a topological equivalence result by assuming that (\ref{lineal}) has a dichotomy and by constructing the homeomorphisms $H$ and $G$ based in the Green's function associated to the dichotomy combined with technical conditions on the nonlinear part. Secondly, we make the point of mentioning the work of G. Papaschinopoulos \cite{Papaschinopoulos} which studied the topological equivalence in a continuous/discrete framework and the discrete case is studied as a technical step. We also mention the work \cite{Castaneda2016}, where the authors obtained a $\mathbb{Z}$--topological equivalence result by considering a generalized exponential dichotomy in the linear part combined with the Reinfelds's assumptions on the nonlinearities and the continuity of $G$ and $H$ is addressed in detail. We also highlight a related result from L. Barreira and C. Valls \cite{Barreira2006} which is not exactly a topological equivalence but considers
a linear part with a nonuniform exponential dichotomy on $\mathbb{Z}$ and obtained properties of H\"older regularity on the corresponding homeomorphisms.

We point out that there exist other linearization results which follow ideas and methods different to 
the Palmer's construction. In particular, we highligth the approach based in the crossing times with the unit ball which has been employed with several variations in \cite{Barreira2016,Castaneda2018,Kurzweil}.

\subsection{Notation} Throughout this paper, the symbols $|\cdot|$ and $||\cdot||$ will denote respectively a vector norm and its induced matrix norm. The Banach space of bounded sequences from $\mathbb{Z}^{+}$ to
$\mathbb{R}^{d}$ will be denoted by $\ell^{\infty}(\mathbb{Z}^{+},\mathbb{R}^{d})$ with supremum norm 
$|\cdot|_{\infty}$.

\begin{definition}
A fundamental matrix of the system $(\ref{lineal})$ is a matrix function $\Phi\colon \mathbb{Z}^{+}\to M_{d}(\mathbb{R})$ such that its columns are a basis of solutions of $(\ref{lineal})$ and satisfies the matrix difference equation
\begin{displaymath}
\Phi(n+1)=A(n)\Phi(n).
\end{displaymath}
\end{definition}
\begin{definition}
The transition matrix of $(\ref{lineal})$ is defined by:
\begin{eqnarray}
\label{transitionmatrix1}
\Phi(k,n)=\left\{
\begin{array}{ccl}
 A(k-1)A(k-2)\cdots A(n), &\mbox{if}& k>n, \\\\
 I, &\mbox{if}& k=n, \\\\ 
 A^{-1}(k)A^{-1}(k+1)\cdots A^{-1}(n-1) & \mbox{if}& \: k<n. \end{array}
\right.
\end{eqnarray}
\end{definition}

\subsection{Novelty of this work} Our work is inscribed in the context of Palmer's approach 
considered previously in \cite{Castaneda2016,Papaschinopoulos,Reinfelds1997,Reinfelds} but has some differences that will be explained below.

First of all and contrarily to the previous references, we obtain a result of topological equivalence with $J=\mathbb{Z}^{+}$ instead of $J=\mathbb{Z}$. This fact induced technical differences and additional difficulties when constructing the maps $G$ and $H$ mainly due to the lack of admissibility results.

Secondly, we have obtained more detailed characterizations of the maps $G$ and $H$. In particular, we emphasize the remarkable simplicity for the map $u\mapsto G(k,u)$, this fact allow us to deduce some nice and new identities for $G$.

Finally, when we restrict our attention to the case when the linear system is (\ref{lineal}) is asymptotically stable, we obtain sharper results of $\mathbb{Z}^{+}$--topological equivalence which allows a simpler study about the smoothness properties of $G$ and $H$ 
and to prove that the asymptotic stability is preserved by the equivalence when the nonlinear system has an equilibrium.

\section{Main Result}

\subsection{Statement} In order to state the main result, we will assume that the linear system $(\ref{lineal})$ satisfies the following pro\-per\-ties:
\begin{itemize}
    \item[{\bf (P1)}] The matrix function $k\mapsto A(k)$ is invertible and  uniformly bounded, that is, there exists $M \geq1$ such that
    $$
    \displaystyle \max\left\{\sup_{k\in \mathbb{Z}^{+}}||A(k)||,\sup_{k\in \mathbb{Z}^{+}}||A^{-1}(k)||\right\}=M.
    $$
    \item[{\bf (P2)}] The linear system
    (\ref{lineal}) has a nonuniform dichotomy. That is, there exists two invariant projectors $P(\cdot)$ and $Q(\cdot)$ such that $P(n)+Q(n)=I(n)$ for any $n\in \mathbb{Z}^{+}$,
    a bounded sequence $\rho$ 
 and a decreasing sequence $h$ convergent to zero with $h(0)=1$ such that:
    \begin{displaymath}
    \left\{\begin{array}{rl}
    ||\Phi(k,n)P(n)||\leq \displaystyle \rho(n)\Big(\frac{h(k)}{h(n)}\Big), & \forall k\geq n \geq 0\\\\
    ||\Phi(k,n)Q(n)||\leq \displaystyle \rho(n)\Big(\frac{h(n)}{h(k)}\Big), & \forall 0\leq k \leq n.
    \end{array}\right.
    \end{displaymath}
\end{itemize}    
 
 \begin{remark}
The assumption \textbf{(P1)} is only techical and implies that the linear system (\ref{lineal}) has the property of bounded growth on $\mathbb{Z}^{+}$, that is, 
$$
||\Phi(k,\ell)||\leq M^{|k-\ell|} \quad \textnormal{for any $k,\ell\in \mathbb{Z}^{+}$}
$$
and we refer the reader to \cite{Aulbach} and \cite{Coppel} for details.
\end{remark}

\begin{remark}
The assumption \textbf{(P2)} can be seen as a nonautonomous version of the hyperbolicity property of the autonomous case. 

\noindent i) When $\rho(n)=K>0$ and $h(n)=\theta^{n}$ with $\theta\in (0,1)$ for any $n\in \mathbb{Z}^{+}$, \textbf{(P2)} means that the system (\ref{lineal}) has the property of uniform exponential dichotomy on $J=\mathbb{Z}^{+}$. This property and some of its consequences has been extensively studied in \cite{Kurzweil,Papas85,Papas87,Papas88}.

\noindent ii) When $\rho(n)=K>0$ and 
$h(n)=\exp(-\sum_{j=0}^{n}u_{j})$, where the sequence $u_{j}$ is positive and non 
 summable, \textbf{(P2)} means that the system (\ref{lineal}) has the property of generalized exponential dichotomy on $J=\mathbb{Z}^{+}$. This property and its applications in topological equivalence has been studied in \cite{Castaneda2016} for the case $J=\mathbb{Z}$. 

\noindent iii) When considering assumptions different and/or more general than those stated by \textbf{(P2)}
we can obtain other dichotomies and we mention some as the $(h,k)$--dichotomies \cite{Crai}, the  nonuniform exponential dichotomy \cite{Megan2016,Barreira2016}, the $(\mu,\nu)$-- nonuniform dichotomies \cite{Bento} and and the polynomial dichotomies \cite{BarreiraTMNA}.

\end{remark}

\begin{remark}
\label{stabilite}
The property \textbf{(P2)} restricted to the particular case $P(n)=I$ and $Q(n)=0$
implies that the origin is an asymptotically stable equilibrium of (\ref{lineal}). In addition, \textbf{(P2)} allows us to characterize several types of asymptotic stabilities. It is well known that the
uniform asymptotical stability is verified if and only if (\ref{lineal}) admits a uniform exponential dichotomy on $\mathbb{Z}^{+}$ with $P(n)=I$. The dichotomy property allows us to describe another type of asymptotic stabilities more general than the uniform one which are described in terms of $(\rho, h)-$contractions (see \cite{Chu} for details).
\end{remark}

\begin{remark}
The projectors $P$ and $Q$ are named invariant since they satisfy
$$
P(k)\Phi(k,n)=\Phi(k,n)P(n) \quad \textnormal{and} \quad Q(k)\Phi(k,n)=\Phi(k,n)Q(n) \quad \textnormal{for any $k,n\in \mathbb{Z}^{+}$}
$$
and finally, \textbf{(P2)} allows to define the Green's function $\mathcal{G}\colon \mathbb{Z}^{+}\times \mathbb{R}^{d}\to \mathbb{R}^{d}$ associated to the linear system (\ref{lineal}) as follows
\begin{displaymath}
\mathcal{G}(k,n)=\left\{\begin{array}{rl}
    \Phi(k,n)P(n) & \forall \, k\geq n \geq 0,\\\\
    -\Phi(k,n)Q(n) & \forall \,0\leq k< n.
    \end{array}\right.
\end{displaymath}

\end{remark}    

Moreover, we will assume that the nonlinear system $(\ref{nolinealsystem})$ has a perturbation $f$ 
that satisfies the following properties 

    \begin{itemize}    
    \item[{\bf (P3)}] For any $k\in \mathbb{Z}^{+}$ and any couple $(y,\tilde{y})\in \mathbb{R}^{d}\times \mathbb{R}^{d}$ it follows that:
    $$
    |f(k,y)-f(k,\tilde{y})|\leq \gamma(k) |y-\tilde{y}|\quad \textnormal{and} \quad |f(k,y)|\leq \mu(k).
    $$
    \item[{\bf{(P4)}}] The sequence $\mu$ defined above verifies
\begin{displaymath}
N(\ell,\mu)=\sum\limits_{j=0}^{\infty}|\mathcal{G}(\ell,j+1)\mu(j)|=p<+\infty \quad \textnormal{for any $\ell\in \mathbb{Z}^{+}$}.
\end{displaymath}

\item[{\bf{(P5)}}] The sequence $\gamma$ defined above verifies
\begin{displaymath}
N(\ell,\gamma)=\sum\limits_{j=0}^{\infty}|\mathcal{G}(\ell,j+1)\gamma(j)|=q<1 \quad \textnormal{for any $\ell\in \mathbb{Z}^{+}$}.
\end{displaymath}
\item[{\bf{(P6)}}] The sequence $\gamma(\cdot)$ and $A(\cdot)$ are such that
\begin{displaymath}
|A^{-1}(\ell)\gamma(\ell)|<1 \quad \textnormal{for any $\ell\in \mathbb{Z}^{+}$}.
\end{displaymath}
\end{itemize}

\begin{remark}
The properties \textbf{(P3)}--\textbf{(P5)} have been used previously in the study of the topological equivalence problem in \cite{Reinfelds} and later on \cite{Castaneda2016}. These properties allowed to generalize the construction of the homeomorphisms when the linear system (\ref{lineal}) has dichotomies more general than the exponential one. Nevertheless, there exists a trade off between the assumptions on the linear part and the nonlinear perturbation. More general dichotomies on the linear part induced more restrictive assumptions on the perturbations. 
\end{remark}

\begin{remark}
\label{R6}
The property \textbf{(P6)} is a technical assumption and ensures that 
any solution $n\mapsto y(n,k,\eta)$ of the nonlinear system (\ref{nolinealsystem}) 
passing through $\eta$ at $n=k$ can be backward continued for any $n\in \{0,\ldots,k-1\}$. If fact, notice that $y(k-1,k,\eta)$ can be seen as the unique fixed point of the map
$\Theta_{k-1}\colon \mathbb{R}^{d}\to \mathbb{R}^{d}$ defined by $\Theta_{k}(u)=A^{-1}(k-1)\eta-A^{-1}(k-1)f(k-1,u)$ and the terms $y(n,k,\eta)$ with $n\in \{0,\ldots,k-2\}$ can be obtained in a similar way.

We must point out that \textbf{(P5)} implies that
\begin{displaymath}
|A^{-1}(\ell)Q(\ell)\gamma(\ell)|<q-\sum\limits_{\substack{j=0\\j\neq \ell}}^{\infty}|\mathcal{G}(\ell,j+1)\gamma(j)|<q<1.
\end{displaymath}
\end{remark}

As we have set forth the premises now we are able to state our main result 
\begin{theorem}
\label{main}
If the assumptions {\bf (P1)}-{\bf (P6)} are satisfied 
then the systems $(\ref{lineal})$ and $(\ref{nolinealsystem})$ are $\mathbb{Z}^{+}-$ topologically equivalent.
\end{theorem}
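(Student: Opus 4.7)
The strategy is the classical Palmer--Reinfelds approach adapted to the half-line $\mathbb{Z}^{+}$: construct $H$ and $G$ by writing a solution of the perturbed system as a solution of the linear system plus a bounded correction that satisfies a variation-of-constants identity involving the Green's function $\mathcal{G}$. The crux is to set up two Banach fixed-point problems in $\ell^{\infty}(\mathbb{Z}^{+},\mathbb{R}^{d})$, with contraction provided by \textbf{(P5)} and boundedness of the corrections provided by \textbf{(P4)}. The asymmetry of the half-line setting means that only $G$ admits a closed-form expression, while $H$ must be defined implicitly through a fixed-point equation along the unperturbed orbit.

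I would first construct $G$. Fix $k\in\mathbb{Z}^{+}$ and $\eta\in\mathbb{R}^{d}$. By \textbf{(P6)} together with Remark \ref{R6}, the solution $n\mapsto y(n,k,\eta)$ of (\ref{nolinealsystem}) with $y(k)=\eta$ exists and is unique for all $n\in\mathbb{Z}^{+}$, because backward steps are recovered as the fixed point of the contraction $u\mapsto A^{-1}(n-1)\eta-A^{-1}(n-1)f(n-1,u)$. I then set
\begin{displaymath}
G(k,\eta)=\eta-\sum_{j=0}^{\infty}\mathcal{G}(k,j+1)\,f\bigl(j,y(j,k,\eta)\bigr).
\end{displaymath}
The series converges absolutely with $|G(k,\eta)-\eta|\leq p$ by \textbf{(P4)}, giving property (ii). A direct computation using $\mathcal{G}(k+1,j+1)=A(k)\mathcal{G}(k,j+1)$ for $j\neq k$ and the jump identity at $j=k$ shows that $k\mapsto G(k,y(k,k_{0},\eta_{0}))$ satisfies (\ref{lineal}); this verifies (i). Continuity of $\eta\mapsto G(k,\eta)$ follows from the continuous dependence of the orbit $y(\cdot,k,\eta)$ on $\eta$, dominated convergence in the series, and \textbf{(P3)}.

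For $H$, given a solution $x(k)=\Phi(k,0)\xi$ of (\ref{lineal}), I seek a bounded correction $z\in\ell^{\infty}(\mathbb{Z}^{+},\mathbb{R}^{d})$ such that $x(k)+z(k)$ solves (\ref{nolinealsystem}). Equivalently, $z$ must be the unique bounded solution of $z_{k+1}=A(k)z_{k}+f(k,x(k)+z(k))$, which via $\mathcal{G}$ translates into the fixed-point equation
\begin{displaymath}
(\mathcal{T}_{\xi}z)(k)=\sum_{j=0}^{\infty}\mathcal{G}(k,j+1)\,f\bigl(j,\Phi(j,0)\xi+z(j)\bigr).
\end{displaymath}
By \textbf{(P4)} the operator $\mathcal{T}_{\xi}$ maps $\ell^{\infty}(\mathbb{Z}^{+},\mathbb{R}^{d})$ into itself with $\|\mathcal{T}_{\xi}z\|_{\infty}\leq p$, and by \textbf{(P3)}--\textbf{(P5)} it is a $q$-contraction. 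I then define $H(k,x(k))=x(k)+z^{*}(k)$, where $z^{*}=z^{*}(\cdot,\xi)$ is the unique fixed point. Property (ii) is immediate with bound $p$; property (i) is the defining feature of $z^{*}$; continuity of $\xi\mapsto z^{*}(\cdot,\xi)$, hence of $u\mapsto H(k,u)$, follows from standard parameter-dependence of Banach fixed points.

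The remaining and most delicate step is to prove that $u\mapsto H(k,u)$ is a homeomorphism with inverse $u\mapsto G(k,u)$. The standard trick is a uniqueness-of-bounded-solutions argument: show both $H\circ G=\mathrm{id}$ and $G\circ H=\mathrm{id}$ by verifying that two candidate bounded solutions of the same nonhomogeneous equation coincide, which is forced by \textbf{(P5)}. Concretely, given a solution $y(k)$ of (\ref{nolinealsystem}), both $y(k)-G(k,y(k))$ and the correction $z^{*}$ associated to the linear orbit $k\mapsto G(k,y(k))$ are bounded solutions of the same linear nonhomogeneous equation driven by $f(\cdot,y(\cdot))$; uniqueness of the bounded solution under \textbf{(P4)}--\textbf{(P5)} forces equality, whence $H(k,G(k,y(k)))=y(k)$. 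The reverse identity is analogous. I expect this compatibility argument, together with the careful verification that solutions of (\ref{nolinealsystem}) really are globally defined on $\mathbb{Z}^{+}$ (where \textbf{(P6)} is essential because invertibility of $A(k)$ alone is not enough to solve implicit backward steps in the presence of $f$), to be the main technical obstacle.
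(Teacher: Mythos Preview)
Your proposal follows the same Palmer--Reinfelds construction as the paper: $G$ is given in closed form by the Green's function sum along the nonlinear orbit, $H$ is obtained as the Banach fixed point of your operator $\mathcal{T}_{\xi}$ (called $\Gamma_{(m,\xi)}$ in the paper), bijectivity is established by a contraction argument, and continuity is handled last.

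There is, however, one genuine gap in your bijectivity step. For $H\circ G=\mathrm{id}$ you invoke ``uniqueness of the bounded solution'' of the linear nonhomogeneous equation driven by $f(\cdot,y(\cdot))$. On $\mathbb{Z}^{+}$ that uniqueness \emph{fails}: every vector in the range of $P(0)$ launches a bounded homogeneous solution, so the forced equation has an entire affine family of bounded solutions. This lack of admissibility on the half line is precisely the technical difficulty the paper flags in its introduction as distinguishing the $\mathbb{Z}^{+}$ case from $\mathbb{Z}$. Your phrasing is also circular: saying that $z^{*}$ solves the equation ``driven by $f(\cdot,y(\cdot))$'' already presupposes $x+z^{*}=y$, which is what you are trying to prove. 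The paper's fix is to set $w(k)=|H[k,G[k,y(k)]]-y(k)|$ and show directly from \textbf{(P5)} that $|w|_{\infty}\le q\,|w|_{\infty}$, hence $w\equiv 0$; equivalently, one verifies that $y(\cdot)-G[\cdot,y(\cdot)]$ is itself a fixed point of the \emph{nonlinear} contraction $\mathcal{T}$ and invokes uniqueness of that fixed point, not uniqueness of bounded solutions. The companion identity $G\circ H=\mathrm{id}$ the paper obtains by a direct two-line substitution (exploiting that $G$ is explicit), rather than by a symmetric argument.

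Two smaller remarks. The paper also derives the alternative representation $G(k,\eta)=\Phi(k,0)\bigl\{y(0,k,\eta)+w^{*}(0;(k,\eta))\bigr\}$ and bases its continuity proof for $G$ on explicit backward Lipschitz estimates through \textbf{(P6)}; your dominated-convergence shortcut is valid and arguably cleaner, but note that the pointwise continuous dependence $\eta\mapsto y(j,k,\eta)$ for $j<k$ still requires the backward-step contraction supplied by \textbf{(P6)}. For the continuity of $H$ the paper spells out the Picard iterates and proves uniform (in $\xi$) convergence before passing to the limit, which is exactly the ``standard parameter dependence'' you cite.
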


\begin{proof}
The proof of this result will be made in several steps.

\noindent\textit{Step 1: Preliminaries.} Let $k\mapsto x(k,m,\xi)$ and $k\mapsto y(k,m,\eta)$ be the respective solutions of the systems $(\ref{lineal})$ and $(\ref{nolinealsystem})$ 
with initial conditions $\xi$ and $\eta$ at $k=m$. 

Now, let us introduce the map:
\begin{equation}
\label{map1}
\begin{array}{rcl}
w^{*}(k;(m,\eta))&=&\displaystyle -\sum\limits_{j=0}^{\infty}\mathcal{G}(k,j+1)f(j,y(j,m,\eta)),\\\\
&=&
\displaystyle -\sum\limits_{j=0}^{k-1}\Phi(k,j+1)P(j+1)f(j,y(j,m,\eta))\\\\
&&\displaystyle  +\sum\limits_{j=k}^{\infty}\Phi(k,j+1)Q(j+1)f(j,y(j,m,\eta)),
\end{array}
\end{equation}
and the map $\Gamma_{(m,\xi)}\colon \ell^{\infty}(\mathbb{Z}^{+},\mathbb{R}^{d}) \to  \ell^{\infty}(\mathbb{Z}^{+},\mathbb{R}^{d})$ defined by
\begin{displaymath}
\begin{array}{rcl}
\Gamma_{(m,\xi)}\phi(k)&=& \displaystyle
\sum\limits_{j=0}^{+\infty}\mathcal{G}(k,j+1)f(j,x(j,m,\xi)+\phi(j)),
\\\\
&=&\displaystyle \sum\limits_{j=0}^{k-1}\Phi(k,j+1)P(j+1)f(j,x(j,m,\xi)+\phi(j))\\\\
&& \displaystyle
-\sum\limits_{j=k}^{\infty}\Phi(k,j+1)Q(j+1)f(j,x(j,m,\xi)+\phi(j)).
\end{array}
\end{displaymath}

Take notice that the backward continuation of the solutions of the nonlinear system (\ref{nolinealsystem}) is necessary to ensure
that the map $k\mapsto w^{*}(k;(m,\eta))$ is well defined; this is provided by \textbf{(P6)} and Remark \ref{R6}.

Let $\phi,\psi\in \ell^{\infty}(\mathbb{Z}^{+},\mathbb{R}^{d})$. By using \textbf{(P2)},\textbf{(P3)} and \textbf{(P4)} we can note that
\begin{displaymath}
\begin{array}{rcl}
\left|\Gamma_{(m,\xi)}\phi(k)-\Gamma_{(m,\xi)}\psi(k)\right|&\leq & \displaystyle \sum\limits_{j=0}^{+\infty}\left|\mathcal{G}(k,j+1)\{f(j,x(j,m,\xi)+\phi(j))-f(j,x(j,m,\xi)+\psi(j))\}\right|\\\\
&\leq&  \displaystyle \sum\limits_{j=0}^{+\infty}\gamma(j)||\mathcal{G}(k,j+1)||\,|\phi(j))-\psi(j))|\\\\
&\leq& q\,|\phi-\psi|_{\infty}
\end{array}
\end{displaymath}
and by using the Banach contraction principle we have the existence of a unique fixed point
\begin{equation}
\label{FP}
z^{*}(k;(m,\xi))=\sum\limits_{j=0}^{+\infty}\mathcal{G}(k,j+1)f(j,x(j,m,\xi)+z^{*}(j;(m,\xi))).
\end{equation}

It is easy to verify that the maps $k\mapsto w^{*}(k;(m,\eta))$ and $k\mapsto z^{*}(k;(m,\xi))$ are solutions of the
initial value problems:
\begin{displaymath}
\left\{\begin{array}{cll}
     w_{k+1}&=&A(k)w_{k}-f(k,y(k,m,\eta))  \\\\
     w_{0}&=& \displaystyle -\sum\limits_{j=0}^{\infty}\Phi(0,j+1)Q(j+1)f(j,y(j,m,\eta)).
\end{array}\right.
\end{displaymath}
and 
\begin{displaymath}
\left\{\begin{array}{cll}
     z_{k+1}&=&A(k)z_{k}+f(k,x(k,m,\xi)+z_{k})  \\
     z_{0}&=& \displaystyle  \sum\limits_{j=0}^{+\infty}\Phi(0,j+1)Q(j+1)f(j,x(j,m,\xi)+z^{*}(j;(m,\xi)))
\end{array}\right.
\end{displaymath}

\noindent\textit{Step 2: Constructing $H$ and $G$.} By the uniqueness of solutions we have that
\begin{equation}
\label{semiflujo-lineal}
x(k,m,\xi)=x(k,p,x(p,m,\xi)) \quad \textnormal{for any $k,p,m\in \mathbb{Z}^{+}$},
\end{equation}
and the reader can verify that
\begin{equation}
\label{identidad-dif}
z^{*}(k;(m,\xi))=z^{*}(k;(p,x(p,m,\xi))) \quad \textnormal{for any $k,p,m\in \mathbb{Z}^{+}$}.
\end{equation}

For any fixed $k\in \mathbb{Z}^{+}$, let us construct the maps $H(k,\cdot)\colon \mathbb{R}^{d}\to \mathbb{R}^{d}$
and  $G(k,\cdot)\colon \mathbb{R}^{d}\to \mathbb{R}^{d}$ as follows:

\begin{equation}
\label{Homeo-H}
\left\{
\begin{array}{rcl}
H(k,\xi) & = & \xi +\sum\limits_{j=0}^{+\infty}\mathcal{G}(k,j+1)f(j,x(j,k,\xi)+z^{*}(j;(k,\xi))) \\\\
         & = & \xi+ z^{*}(k;(k,\xi)),
\end{array}\right.
\end{equation}
and
\begin{equation}
\label{Homeo-G}
\left\{
\begin{array}{rcl}
G(k,\eta) & = & \eta-\sum\limits_{j=0}^{+\infty}\mathcal{G}(k,j+1)f(j,y(j,k,\eta))\\\\
          & = & \eta + w^{*}(k;(k,\eta)).
\end{array}\right.
\end{equation}

As $k\mapsto z^{*}(k;(k,\xi))$ and $k\mapsto w^{*}(k;(k,\eta))$ are uniformly bounded sequences,
it follows that both $H$ and $G$ satisfy the statement (ii) from the Definition \ref{TopEq}. Now, in order 
to study some additional properties of $G$, let us consider the initial value problem:
\begin{eqnarray}
\label{PVInolinealsyestem}
\left\{\begin{array}{ccl}
    y_{n+1} & = &A(n)y_{n}+f(n,y_{n})  \\
    y_{k}&= & \eta.
\end{array}\right.
\end{eqnarray}
If $n<k$ we have that:
$$
y(n,k,\eta)=\Phi(n,k)\eta-\sum_{j=n}^{k-1}\Phi(n,j+1)f(j,y(j,k,\eta)),
$$
which is equivalent to:
\begin{displaymath}
\begin{array}{rcl}
\Phi(k,n)y(n,k,\eta)&=&\displaystyle  \eta-\sum_{j=n}^{k-1}\Phi(k,j+1)f(j,y(j,k,\eta)),\\\\
&=&\displaystyle  \eta-\sum_{j=n}^{k-1}\Phi(k,j+1)\{P(j+1)+Q(j+1)\}f(j,y(j,k,\eta)),\\\\
&=&\displaystyle
\eta-\sum_{j=n}^{k-1}\Phi(k,j+1)P(j+1)f(j,y(j,k,\eta))-\sum_{j=n}^{k-1}\Phi(k,j+1)Q(j+1)f(j,y(j,k,\eta)).
\end{array}
\end{displaymath}

In particular, if $n=0$ we have that
\begin{displaymath}
\begin{array}{rcl}
\Phi(k,0)y(0,k,\eta)&=&\displaystyle
\eta-\sum_{j=0}^{k-1}\Phi(k,j+1)P(j+1)f(j,y(j,k,\eta))-\sum_{j=0}^{k-1}\Phi(k,j+1)Q(j+1)f(j,y(j,k,\eta)),\\\\
&=&\displaystyle
\eta-\sum_{j=0}^{k-1}\Phi(k,j+1)P(j+1)f(j,y(j,k,\eta))+\sum_{j=k}^{\infty}\Phi(k,j+1)Q(j+1)f(j,y(j,k,\eta))\\\\
& & \displaystyle
-\sum_{j=0}^{\infty}\Phi(k,j+1)Q(j+1)f(j,y(j,k,\eta)),\\\\
&=& \displaystyle \eta- \sum\limits_{j=0}^{\infty}\mathcal{G}(k,j+1)f(j,y(j,k,\eta))-\sum_{j=0}^{\infty}\Phi(k,j+1)Q(j+1)f(j,y(j,k,\eta)),\\\\
&=&\displaystyle G(k,\eta)-\Phi(k,0)\sum_{j=0}^{\infty}\Phi(0,j+1)Q(j+1)f(j,y(j,k,\eta)).
\end{array}
\end{displaymath}

Now, by using the definition of the map $n\mapsto w^{*}(0;(k,\eta))$ we can deduce that
\begin{equation}
\label{homeo-G1}
\begin{array}{rcl}
G(k,\eta)&=&\displaystyle \Phi(k,0)\{y(0,k,\eta)+\sum_{j=0}^{\infty}\Phi(0,j+1)Q(j+1)f(j,y(j,k,\beta))\}\\\\
&=&\displaystyle \Phi(k,0)\{y(0,k,\eta)+w^{*}(0;(k,\eta))\}
\end{array}
\end{equation}

\noindent\textit{Step 3: $H$ maps solutions of (\ref{lineal}) into solutions of (\ref{nolinealsystem})
and $G$ maps solutions of (\ref{nolinealsystem}) into solutions of (\ref{lineal}).}  By using (\ref{semiflujo-lineal}), (\ref{identidad-dif}) and (\ref{Homeo-H}) we can see that
\begin{displaymath}
\begin{array}{rcl}
H[k,x(k,m,\xi)] & = & x(k,m,\xi) +\sum\limits_{j=0}^{\infty}\mathcal{G}(k,j+1)f(j,x(j,m,\xi)+z^{*}(j;(m,\xi)))) \\\\
 & = & x(k,m,\xi) +z^{*}(k;(m,\xi)),
\end{array}
\end{displaymath}
which has the alternative description of
\begin{displaymath}
H[k,x(k,m,\xi)]  =  x(k,m,\xi) +\sum\limits_{j=0}^{\infty}\mathcal{G}(k,j+1)f(j,H[j,x(j,m,\xi)]).
\end{displaymath}

The above identities combined with (\ref{lineal}), (\ref{FP}) and 
$\mathcal{G}(k+1,j+1)=A(k)\mathcal{G}(k,j)$ allows us to prove that
\begin{displaymath}
\begin{array}{rcl}
H[k+1,x(k+1,m,\xi)] & = & x(k+1,m,\xi)+z^{*}(k+1;(m,\xi)) \\\\
                    & = & A(k)\left\{x(k,m,\xi)+z^{*}(k;(m,\xi)))\right\}+f(k,x(k,m,\xi)+z^{*}(k;(m,\xi)))\\\\
                    & = & A(k)H[k,x(k,m,\xi)]+f(k,H[k,x(k,m,\xi)]),
\end{array}
\end{displaymath}
at this point we conclude that $k\mapsto H[k,x(k,m,\xi)]$ is solution of $(\ref{nolinealsystem})$ passing through $H(m,\xi)$ at $k=m$. In addition, 
as consequence of uniqueness of solution we obtain that
\begin{displaymath}
H[k,x(k,m,\xi)]=y(k,m,H(m,\xi)).
\end{displaymath}

We can summarize several characterizations of $H[k,x(k,m,\eta)]$:
\begin{equation}
\label{identity-HY}
H[k,x(k,m,\xi)]=\left\{
\begin{array}{l}
x(k,m,\xi) +z^{*}(k;(m,\xi))\\\\
 x(k,m,\xi) +\sum\limits_{j=0}^{\infty}\mathcal{G}(k,j+1)f(j,H[j,x(j,m,\xi)])\\\\
 y(k,m,H(m,\xi))
 \end{array}\right.
\end{equation}

Similarly, the uniqueness of solutions implies the identity
\begin{equation}
\label{semiflujo-nolin}   
y(k,m,\eta)=y(k,p,y(p,m,\eta)) \quad \textnormal{ for any $k,p,m\in \mathbb{Z}^{+}$},
\end{equation}
which allows us to deduce that
\begin{equation}
\label{identidad-dif2}
w^{*}(k;(m,\eta))=w^{*}(k;(p,y(p,m,\eta))) \quad \textnormal{for any $k,p,m\in \mathbb{Z}^{+}$}.
\end{equation}

Thus from the previous expression it follows that
\begin{eqnarray*}
G[k,y(k,m,\eta)]&=&y(k,m,\eta)-\sum_{j=0}^{\infty}\mathcal{G}(k,j+1)f(j,y(j,m,\eta)),\\
                &=&y(k,m,\eta)+w^{*}(k;(m,\eta)),
\end{eqnarray*}
now let us note that
\begin{eqnarray*}
G[k+1,y(k+1,m,\eta)]&=&y(k+1,m,\eta)+w^{*}(k+1;(m,\eta)),\\
                    &=&A(k)\{y(k,m,\eta)+w^{*}(k;(m,\eta))\}+f(k,y(k,m,\eta))-f(k,y(k,m,\eta)),\\
                    &=&A(k)G[k,y(k,m,\eta)]
\end{eqnarray*}
then $k\mapsto G[k,y(k,m,\eta)]$ is solution of $(\ref{lineal})$ passing through $G(m,\eta)$ at $k=m.$ 

\bigskip 
In addition, since $k\mapsto G[k,y(k,m,\eta)]$ is solution of $(\ref{lineal})$ passing through $G(m,\eta)$ at $k=m$, then we have that
\begin{displaymath}
G[k,y(k,m,\eta)]=x(k,m,G(m,\eta))=\Phi(k,m)G(m,\eta),
\end{displaymath}
which also has an alternative formulation by using (\ref{homeo-G1}) and (\ref{identidad-dif2}):
$$
G[k,y(k,m,\eta)]=\Phi(k,0)\{y(0,m,\eta)+w^{*}(0;(m,\eta))\}.
$$
We are also available to summarize several characterizations of $G[k,y(k,m,\eta)]$:
\begin{equation}
\label{identity-GX}
G[k,y(k,m,\eta)]=\left\{\begin{array}{l}
y(k,m,\eta)+w^{*}(k;(m,\eta))\\\\
x(k,m,G(m,\eta))=\Phi(k,m)G(m,\eta)\\\\
\Phi(k,0)\{y(0,m,\eta)+w^{*}(0;(m,\eta))\}.
\end{array}\right.
\end{equation}


\noindent\textit{Step 4: $u\mapsto G(k,u)$ and $u\mapsto H(k,u)$ are bijective for any fixed $k\in \mathbb{Z}^{+}$.} 

\bigskip

By using the description of $H[k,x(k,m,\xi)]$ combined with identities (\ref{semiflujo-nolin}) and (\ref{identity-HY}) we can deduce that
\begin{displaymath}
\begin{array}{rcl}
G[k,H[k,x(k,m,\xi)]] & = & H[k,x(k,m,\xi)] \\\\
&    &  \displaystyle
-\sum\limits_{j=0}^{\infty}\mathcal{G}(k,j+1)f(j,y(j,k,H[k,x(k,m,\xi)\, ]\, )\, )\\\\
& = & \displaystyle x(k,m,\xi)+\sum\limits_{j=0}^{\infty}\mathcal{G}(k,j+1)f(j,H[j,x(j,m,\xi)\, ]\, )\\\\
&   &\displaystyle -\sum\limits_{j=0}^{\infty}\mathcal{G}(k,j+1)f(j,y(j,k,H[k,x(k,m,\xi)\, ]\,)\,)\\\\
& = & x(k,m,\xi).
\end{array}
\end{displaymath}




In order to study $H[k,G[k,y(k,m,\eta)]]$, the first identity of (\ref{identity-HY}) allows us to verify that
\begin{displaymath}
\begin{array}{rcl}
 H[j,x(j,k,G[k,y(k,m,\eta)])] &=&  x(j,k,G[k,y(k,m,\eta)])+z^{*}(j;(k,G[k,y(k,m,\eta)]))\\\\ 
&=:& L[j,y(k,m,\eta)].
\end{array}
\end{displaymath}

On the other hand, by using (\ref{semiflujo-lineal}), (\ref{identidad-dif}), (\ref{identity-HY}) and (\ref{identity-GX}) it can be proved that
\begin{equation}
\label{IU}
\begin{array}{rcl}
H[j,x(j,k,G[k,y(k,m,\eta)])]
&=&
x(j,m,G(m,\eta))+z^{*}(j;(m,G(m,\eta))) \\\\
&=& H[j,x(j,m,G(m,\eta))]\\\\
&=& H[j,G[j,y(j,m,\eta)]].
\end{array}
\end{equation}

At this juncture, we have that:
\begin{eqnarray*}
H[k,G[k,y(k,m,\eta)]] &=&G[k,y(k,m,\eta)]+\sum_{j=0}^{\infty}\mathcal{G}(k,j+1)f(j,L[j,u(k,m,\eta)])\\\\
                    &=&y(k,m,\eta)-\sum_{j=0}^{\infty}\mathcal{G}(k,j+1)f(j,y(j,k,y(k,m,\eta))\\\\
                    &&+\sum_{j=0}^{\infty}\mathcal{G}(k,j+1)f(j,L[j,y(k,m,\eta)])\\\\
                    &=&y(k,m,\eta)-\sum_{j=0}^{\infty}\mathcal{G}(k,j+1)\{f(j,y(j,m,\eta))-f(j,L[j,y(k,m,\eta)])\}.
\end{eqnarray*}

Now, let us define
$$
w(k)=\left|H[k,G[k,y(k,m,\eta)]]-y(k,m,\eta)\right|.
$$

Then by using the above inequalities combined with (\ref{IU}) we have that
\begin{eqnarray*}
w(k)&\leq &\sum_{j=0}^{\infty}\left|\mathcal{G}(k,j+1)\right|\, |f(j,x(j,L[j,y(k,m,\eta)])-f(j,y(j,m,\eta))|,\\\\
    &\leq &\sum_{j=0}^{\infty}\gamma(j) \left|\mathcal{G}(k,j+1)\right|\, |H[j,G(j,y(j,m,\eta))]-y(j,m,\eta)|,\\\\
    &\leq & \sum_{j=0}^{\infty} \left|\mathcal{G}(k,j+1)\right|\,\gamma(j)w(j).
\end{eqnarray*}

By \textbf{(P4)} we know that $w\in \ell^{\infty}(\mathbb{Z}^{+},\mathbb{R}^{d})$ 
and by \textbf{(P5)} it follows that $|w|_{\infty}\leq q\,|w|_{\infty}$. Therefore if $w>0$ then
$1\leq q$, obtaining a contradiction. Hence $w(k)=0$ for any $k\in \mathbb{Z}^{+}$ and therefore we have
$$
H[k,G[k,y(k,m,\eta)]]=y(k,m,\eta),\quad \forall k\in \mathbb{Z}^{+}.
$$

In particular, if $k=m$ then
$$
H(m,G(m,\eta))=\eta,
$$
and hence we conclude that $u\mapsto H(k,u)$ is a bijection for any $k\in \mathbb{Z}^{+}$
and $u\mapsto G(k,u)$ is its inverse.

\bigskip

\noindent\textit{Step 5: $G$ is a continuous map.} First, if $k\in \mathbb{Z}^{+}$ is fixed, from the expression $(\ref{homeo-G1})$ we have
\begin{eqnarray*}
|G(k,\eta)-G(k,\tilde{\eta})|\leq ||\Phi(k,0)||\, \left\{|y(0,k,\eta)-y(0,k,\tilde{\eta})|+|w^{*}(0;(k,\eta))-w^{*}(0;(k;\tilde{\eta}))|\right\}.
\end{eqnarray*}

We will make separate estimations 
of the terms
\begin{equation}
\label{conP5fA}
|y(0,k,\eta)-y(0,k,\tilde{\eta})|
\quad \textnormal{and} \quad |w^{*}(0;(k,\eta))-w^{*}(0;(k;\tilde{\eta}))|.
\end{equation}

In order to study the first term of (\ref{conP5fA}), we can see that if $n<k$ then
\begin{equation}
\label{dif-1}
y(n,k,\eta)=\Phi(n,k)\eta-\sum_{j=n}^{k-1}\Phi(n,j+1)f(j,y(j,k,\eta)),
\end{equation}
thus (see Remark \ref{R6} for details) we have that
$$
y(k-1,k,\eta)=A^{-1}(k-1)\eta-A^{-1}(k-1)f(k-1,y(k-1,k,\eta)),
$$
then
\begin{eqnarray*}
|y(k-1,k,\eta)-y(k-1,k,\tilde{\eta})|&\leq & ||A^{-1}(k-1)||\, |\eta-\tilde{\eta}|\\\\
&&+||A^{-1}(k-1)\gamma(k-1)||\, |y(k-1,k,\eta)-y(k-1,k,\tilde{\eta})|,
\end{eqnarray*}
which implies by \textbf{(P6)} that
\begin{eqnarray}
\label{antigronwall}
&&|y(k-1,k,\eta)-y(k-1,k,\tilde{\eta})|\leq \frac{||A^{-1}(k-1)||}{\left(1-||A^{-1}(k-1)\gamma(k-1)||\right)}|\eta-\tilde{\eta}|.
\end{eqnarray}

Similarly, it follows that
$$
|y(k-2,k,\eta)-y(k-2,k,\tilde{\eta})|\leq \frac{||A^{-1}(k-2)||}{\left(1-||A^{-1}(k-2)\gamma(k-2)||\right)}|y(k-1,k,\eta)-y(k-1,k,\tilde{\eta})|,
$$
and from $(\ref{antigronwall})$ we have that
$$
|y(k-2,k,\eta)-y(k-2,k,\tilde{\eta})|\leq \prod_{p=k-2}^{k-1}\frac{||A^{-1}(p)||\, }{\left(1-||A^{-1}(p)\gamma(p)||\right)}|\eta-\tilde{\eta}|.
$$

Hence, inductively we can deduce that
$$
|y(k-j,k,\eta)-y(k-j,k,\tilde{\eta})|\leq \prod_{p=k-j}^{k-1}\frac{||A^{-1}(p)||\, }{\left(1-||A^{-1}(p)\gamma(p)||\right)}|\eta-\tilde{\eta}|.
$$

Now, if $n<k$ then there exists $j\in \mathbb{Z}^{+}$ such that $n+j=k.$ Thus
\begin{equation}
\label{conP5rev}
|y(n,k,\eta)-y(n,k,\tilde{\eta})|\leq \mathcal{C}_{k}(n)|\eta-\tilde{\eta}|,\quad \textnormal{for all $n<k$},
\end{equation}
where
$$
\mathcal{C}_{k}(n)=\prod_{j=n}^{k-1}\frac{||A^{-1}(j)||\, }{\left(1-||A^{-1}(j)\gamma(j)||\right)}.
$$

In particular
\begin{eqnarray}
\label{conP5revparticular}
|y(0,k,\eta)-y(0,k,\tilde{\eta})|\leq \mathcal{C}_{k}(0)|\eta-\tilde{\eta}|.
\end{eqnarray}

Now, by {\bf (P4)} we have that the second term of (\ref{conP5fA}) is estimated as follows:
\begin{eqnarray*}
|w^{*}(0;(k,\eta))-w^{*}(0;(k,\tilde{\eta}))|&\leq & \sum_{j=0}^{\infty}|\mathcal{G}(0,j+1)|\, |f(j,y(j,k,\eta))-f(j,y(j,k,\tilde{\eta}))|,\\                                   &\leq &2\sum_{j=0}^{\infty}|\mathcal{G}(0,j+1)\mu(j)|<\infty,
\end{eqnarray*}
thus, given $\varepsilon>0$ there exists $J>0$ such that
\begin{displaymath}
 \sum_{j=J+1}^{\infty}|\mathcal{G}(0,j+1)|\, |f(j,y(j,k,\eta))-f(j,y(j,k,\tilde{\eta}))|< \frac{\varepsilon}{3\,||\Phi(k,0)||}
\end{displaymath}
and we have that
$$
|w^{*}(0;(k,\eta))-w^{*}(0;(k,\tilde{\eta}))|\leq \sum_{j=0}^{J}|\mathcal{G}(0,j+1)|\, |f(j,y(j,k,\eta))-f(j,y(j,k,\tilde{\eta}))|+\frac{\varepsilon}{3\, ||\Phi(k,0)||}.
$$

In addition, by {\bf (P3)} its follows that
$$
|w^{*}(0;(k,\eta))-w^{*}(0;(k,\tilde{\eta}))|\leq \sum_{j=0}^{J}|\mathcal{G}(0,j+1)\gamma(j)|\, |y(j,k,\eta)-y(j,k,\tilde{\eta})|+\frac{\varepsilon}{3\, ||\Phi(k,0)||}.
$$
Moreover, it is easy to see that if $j>k$, the discrete Gronwall's inequality (see for example \cite{Elaydi,Popenda}) implies
\begin{equation}
\label{cont-LG}
|y(j,k,\eta)-y(j,k,\tilde{\eta})|\leq |\eta-\tilde{\eta}|\prod_{p=k}^{j-1}(||A(p)-I||+\gamma(p)).
\end{equation}

By using (\ref{conP5rev}) and (\ref{cont-LG}) we have that
\begin{eqnarray}
\label{acotamientow*}
|w^{*}(0;(k,\eta))-w^{*}(0;(k,\tilde{\eta}))|&\leq & |\eta-\tilde{\eta}|\, \mathcal{B}_{k}(J)+\frac{\varepsilon}{3\, ||\Phi(k,0)||},
\end{eqnarray}
where
$$
\mathcal{B}_{k}(J)=\sum_{j=0}^{J}\mathcal{A}_{k}(j).
$$
where $\mathcal{A}_{k}(j)$ is defined by
\begin{displaymath}
    \mathcal{A}_{k}(j)=\left\{\begin{array}{clr}
    |\mathcal{G}(0,j+1)\gamma(j)|\, \mathcal{C}_{k}(j) &\textnormal{if} & j<k,\\\\
   |\mathcal{G}(0,j+1)\gamma(j)|\displaystyle \prod_{p=k}^{j-1}(||A(p)-I||+\gamma(p))& \textnormal{if} & j\geq k,
    \end{array}\right.
    \end{displaymath}

Finally if 
$$
\displaystyle \delta(\varepsilon,\mu,k)=\min\left\{\frac{\varepsilon}{3\, \mathcal{C}_{k}(0)\, ||\Phi(k,0)||},\frac{\varepsilon}{3\, \mathcal{B}_{k}(J)\,||\Phi(k,0)||}\right\},\bigskip
$$ 
and $|\eta-\tilde{\eta}|<\delta,$ from $(\ref{conP5revparticular})$ and $(\ref{acotamientow*})$ it follows that
\bigskip
\begin{eqnarray*}
|G(k,\eta)-G(k,\tilde{\eta})|&\leq & ||\Phi(k,0)||\, \left\{|y(0,k,\eta)-y(0,k,\tilde{\eta})|+|w^{*}(0;(k,\eta))-w^{*}(0;(k;\tilde{\eta}))|\right\},\\\\
                          &\leq & ||\Phi(k,0)||\, \left\{\mathcal{C}_{k}(0)|\eta-\tilde{\eta}|+|\eta-\tilde{\eta}|\, \mathcal{B}_{k}(J)+\frac{\varepsilon}{3\, ||\Phi(k,0)||}\right\},\\\\
                          &< &||\Phi(k,0)||\, \left\{\frac{\varepsilon}{3\, ||\Phi(k,0)||}+\frac{\varepsilon}{3\, ||\Phi(k,0)||}+\frac{\varepsilon}{3\, ||\Phi(k,0)||}\right\},\\
                          &=&\varepsilon.
\end{eqnarray*}

Therefore, for each $k\in \mathbb{Z}^{+}$ we have that $\mu \mapsto G(k,\eta)$ is a continuous map. 

\bigskip

\noindent\textit{Step 6: $H$ is a continuous map.} In order to study the continuity of (\ref{Homeo-H}) for any fixed $k\in \mathbb{Z}^{+}$, we will prove that
$$
\xi \mapsto z^{*}(k;(k,\xi))
$$
is continuous for any $k\in \mathbb{Z}^{+}$ since $H(k,\xi)=\xi+z^{*}(k;(k,\xi))$.

First of all, let us consider the family of sequences $\{z_{n}(j;(k,\xi))\}_{(k,\xi)} \in \ell_{\infty}(\mathbb{Z}^{+})$ defined recursively as follows:
\begin{displaymath}
\begin{array}{rcl}
z_{n+1}(j;(k,\xi)) & = &
\sum\limits_{\ell=0}^{+\infty}\mathcal{G}(j,\ell+1)f(\ell,x(\ell,k,\xi)+z_{n}(\ell;(k,\xi)))  \quad n\geq 0 \\\\
z_{0}(j;(k,\xi))&=& \varphi \in \ell_{\infty},\quad \textnormal{for all $\xi \in \mathbb{R}^{d}$}.
\end{array}
\end{displaymath}

Now, by \textbf{(P4)}--\textbf{(P5)} it is easy to verify that for each $\xi\in \mathbb{R}^{d}$ and $n\geq m$ we have that
\begin{eqnarray*}
|z_{n}(j;(k,\xi))-z_{m}(j;(k,\xi))| &\leq &(q^{n-1}+\ldots +q^{m})|z_{1}(j;(k,\xi))-\varphi(j)|, \\\\
  &\leq &q^{m}(1-q)^{-1}|z_{1}(j;(k,\xi))-\varphi(j)|, \\\\
&\leq & q^{m}(1-q)^{-1}(|\varphi|_{\infty}+p).
\end{eqnarray*}
Thus, its follows that $n\mapsto z_{n}(j;(k,\xi))$ is a Cauchy sequence, that is
\begin{displaymath}
\forall\tilde{\varepsilon}\,\,\exists N(\tilde{\varepsilon})>0 \quad \textnormal{such that} \quad n,m>N \Rightarrow |z_{n}(j;(k,\xi))-z_{m}(j;(k,\xi))|<\tilde{\varepsilon},
\end{displaymath}
and since $N$ is not dependent of the parameter $\xi\in \mathbb{R}^{d}$ then we have that 
\begin{eqnarray*}
\lim\limits_{n\to +\infty}z_{n}(j;(k,\xi))=z^{*}(j;(k,\xi))  = \sum\limits_{\ell=0}^{+\infty}\mathcal{G}(j,\ell+1)f(\ell,x(\ell,k,\xi)+z^{*}(\ell;(k,\xi))) 
\end{eqnarray*}
is uniform with respect to $j\in \mathbb{Z}^{+}$ and $(k,\xi)\in \mathbb{Z}\times\mathbb{R}^{d}$. Hence
\begin{equation}
\label{limit-uniform}
    \forall\tilde{\varepsilon}\,\,\exists N(k,\varepsilon)>0 \quad \textnormal{such that} \quad n>N \Rightarrow |z_{n}(j;(k,\xi))-z^{*}(j;(k,\xi))|<\tilde{\varepsilon}.
\end{equation}

Secondly, we will see that the map $\xi\mapsto z_{n}(j;(k,\xi))$ is continuous for any $\xi\in \mathbb{R}^{d}$ and $n\in \mathbb{N}.$ For this, let us prove by induction that for each $n\in \mathbb{N}$ and $\varepsilon>0$ there exists $\delta_{n}(\varepsilon,\xi,k)>0$ such that
\begin{eqnarray}
\label{induccion}
|z_{n}(j;(k,\xi))-z_{n}(j;(k;\tilde{\xi}))|<\varepsilon,\quad \textnormal{if} \quad |\xi-\tilde{\xi}|<\delta_{n}.
\end{eqnarray}

From this, we have that $|z_{0}(j;(k,\xi))-z_{0}(j;(k,\tilde{\xi}))|=0<\varepsilon.$ Now, let us suppose that $(\ref{induccion})$ is verified for some $n\in \mathbb{N}.$ In addition, from \textbf{(P4)} we know that, given $j\in \mathbb{Z}^{+}$, $\varepsilon>0$ and $\theta\in (0,1)$ such that
$\theta+q<1$, there exists $L(\varepsilon)>0$ such that
\begin{eqnarray}
\label{colapequena}
 2\sum\limits_{\ell=L+1}^{\infty}||\mathcal{G}(j,\ell+1)||\mu(\ell)<\theta\varepsilon.
\end{eqnarray} 

Thus, if 
$$
\delta_{n+1}(\varepsilon,\xi,k)=\min\left\{\delta_{n}(\varepsilon,\xi,k),\frac{\varepsilon(1-\{q+\theta\})}{\mathcal{S}_{k}q}\right\},
$$
where by {\bf (P1)} we can define
$$
\mathcal{S}_{k}=\max\{||\Phi(0,k)||,||\Phi(1,k)||,...,||\Phi(L,k)||\}<+\infty.
$$

By using $(\ref{colapequena})$ combined with the inductive hypothesis we have that
\begin{eqnarray*}
|z_{n+1}(j;(k,\xi))-z_{n+1}(j;(k;\tilde{\xi}))|&\leq &\sum_{\ell=0}^{L}||\mathcal{G}(j,\ell+1)||\, \gamma(\ell)\left\{||\Phi(\ell,k)||\, |\xi-\tilde{\xi}|+\varepsilon\right\}+\theta \varepsilon\\\\
                &< & \delta_{n+1}\mathcal{S}_{k}q+\varepsilon q+\theta \varepsilon\\
                &= & \varepsilon.
\end{eqnarray*}

Hence, we can concluded that $\xi\mapsto z_{n}(j;(k,\xi))$ is a continuous map.

Subsequently, we will see that the map $\xi \mapsto z^{*}(j;(k,\xi))$ is continuous for any $j\in \mathbb{Z}^{+}$ and $(k,\xi) \in \mathbb{Z}^{+}\times\mathbb{R}^{d}.$ For this, from (\ref{limit-uniform}), we know that for each $\varepsilon>0$ there exists $N(\varepsilon)\in \mathbb{N}$ such that
$$
|z^{*}(j;(k,\tilde{\xi}))-z_{N}(j;(k,\tilde{\xi}))|<\frac{\varepsilon}{3} \quad \textnormal{and} \quad |z^{*}(j;(k,\xi))-z_{N}(j;(k,\xi))|<\frac{\varepsilon}{3},
$$
for any $j\in \mathbb{N}$. In addition, there exists $\delta_{N}(\varepsilon,\xi,k)>0$ such that
$$
|z_{N}(j;(k,\xi))-z_{N}(j,(k,\tilde{\xi}))|<\frac{\varepsilon}{3},\quad \textnormal{if},\quad |\xi-\tilde{\xi}|<\delta_{N}.
$$

Thus, from the above inequalities we have that if $|\xi-\tilde{\xi}|<\delta_{N}$ then
\begin{displaymath}
\begin{array}{rcl}
|z^{*}(j;(k,\xi))-z^{*}(j;(k,\tilde{\xi}))|&\leq & \,\,\,\,\,|z^{*}(j;(k,\xi))-z_{N}(j;(k,\xi))| \\\\
&&+|z_{N}(j;(k,\xi))-z_{N}(j;(k,\tilde{\xi}))|\\\\
&&+|z_{N}(j;(k,\tilde{\xi}))-z^{*}(j;(k,\tilde{\xi}))|\\\\
&<& \varepsilon.
\end{array}
\end{displaymath}

Finally, since $\xi\mapsto z^{*}(j;(k,\xi))$ is a continuous map for all $j\in \mathbb{N},$ in particular $\xi\mapsto z^{*}(k;(k,\xi))$ is a continuous map too. Therefore the map
$$
\xi\mapsto H(k,\xi)=\xi+z^{*}(k;(k,\xi))
$$
is continuous.
\end{proof}
\subsection{Some Consequences}

The intermediate computations proving Theorem \ref{main} allow us to verify some interesting identities:

\begin{remark}
The maps $w^{*}$ and $z^{*}$ defined respectively by (\ref{map1}) and (\ref{FP}) verify the following identities:
\begin{displaymath}
\begin{array}{rcl}
w^{*}(k;(m,\eta))+z^{*}(k;(m,G(m,\eta)))&=&0,\\\\
z^{*}(k;(m,\xi))+w^{*}(k;(m,H(m,\xi))) &=&0.
\end{array}
\end{displaymath}

In fact, by using the identity $H[k,G[k,y(k,m,\eta)]]=y(k,m,\eta)$ combined with (\ref{Homeo-H}), the first and second identities of (\ref{identity-GX}) and (\ref{identidad-dif}) we have that:
\begin{displaymath}
\begin{array}{lcl}
w^{*}(k;(m,\eta))+z^{*}(k;(k,G[k,y(k,m,\eta)])) &=&0\\\\
w^{*}(k;(m,\eta))+z^{*}(k;(k,x(k,m,G(m,\eta)])) &=&0\\\\
w^{*}(k;(m,\eta))+z^{*}(k;(m,G(m,\eta))) &=&0,
\end{array}
\end{displaymath}
and the first identity follows. The second identity can be deduced by replacing $\eta$ by $H(m,\xi)$ in the first identity
and using $G(m,H(m,\xi))=\xi$.
\end{remark}

By considering $m=k$ in the above identities, we have the following consequence
\begin{corollary}
The maps $G$ and $H$ of the  $\mathbb{Z}^{+}$--topological equivalence satisfies the fo\-llo\-wing fixed point properties:
\begin{displaymath}
\begin{array}{rcl}
G(k,\eta)&=&\eta-z^{*}(k;(k,G(k,\eta))),\\\\
H(k,\xi)&=&\xi-w^{*}(k;(k,H(k,\xi))).
\end{array}
\end{displaymath}
\end{corollary}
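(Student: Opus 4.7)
The plan is to derive the two identities as direct specializations of the two identities in the preceding remark, together with the definitions of $G$ and $H$ given in \eqref{Homeo-H} and \eqref{Homeo-G}.

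First, I would take the identity
\[
w^{*}(k;(m,\eta)) + z^{*}(k;(m,G(m,\eta))) = 0
\]
from the remark and set $m = k$, which yields
\[
w^{*}(k;(k,\eta)) + z^{*}(k;(k,G(k,\eta))) = 0.
\]
Now recall from \eqref{Homeo-G} that $G(k,\eta) = \eta + w^{*}(k;(k,\eta))$, so $w^{*}(k;(k,\eta)) = G(k,\eta) - \eta$. Substituting this into the previous display and rearranging gives exactly
\[
G(k,\eta) = \eta - z^{*}(k;(k,G(k,\eta))),
\]
as desired.

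For the second identity I would proceed symmetrically: take
\[
z^{*}(k;(m,\xi)) + w^{*}(k;(m,H(m,\xi))) = 0
\]
from the remark, set $m=k$, and use \eqref{Homeo-H}, namely $H(k,\xi) = \xi + z^{*}(k;(k,\xi))$, so that $z^{*}(k;(k,\xi)) = H(k,\xi) - \xi$. Substituting yields
\[
H(k,\xi) = \xi - w^{*}(k;(k,H(k,\xi))),
\]
which completes the proof.

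Since both remark-identities are already established and the definitions of $G$ and $H$ are explicit, there is essentially no obstacle here: the corollary is a one-line bookkeeping consequence of the remark after specializing $m=k$. The only thing worth emphasizing is that the resulting formulas genuinely have a fixed-point flavor, since $G(k,\eta)$ (respectively $H(k,\xi)$) appears on both sides of its defining identity, which is the main reason for isolating this statement as a separate corollary rather than leaving it inside the remark.
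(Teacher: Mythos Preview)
Your proof is correct and follows exactly the approach the paper intends: the paper simply states that the corollary follows ``by considering $m=k$ in the above identities,'' and you have carried out precisely that specialization together with the substitution of the defining relations \eqref{Homeo-H} and \eqref{Homeo-G}. If anything, you have spelled out more detail than the paper itself does.
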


On the other hand, Theorem \ref{main} has some interesting byproducts, the first one follows immediately after the fact that the topological equivalence is an equivalence relation:
\begin{corollary}
If the linear system (\ref{lineal}) satisfies the assumptions {\bf (P1)}-{\bf (P2)}, then for any function
$g\colon \mathbb{Z}^{+}\times \mathbb{R}^{d}\to \mathbb{R}^{d}$ satisfying the assumptions {\bf (P3)}-{\bf (P6)} it follows that the nonlinear systems
\begin{eqnarray*}
y_{k+1}=A(k)y_{k}+g(k,y_{k})
\end{eqnarray*}
and (\ref{nolinealsystem}) are $\mathbb{Z}^{+}-$ topologically equivalent.
\end{corollary}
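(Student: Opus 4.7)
The plan is to use Theorem \ref{main} twice and exploit the fact that $\mathbb{Z}^{+}$-topological equivalence is transitive. Since $g$ satisfies the same hypotheses \textbf{(P3)}--\textbf{(P6)} that $f$ does, and the linear system $(\ref{lineal})$ satisfies \textbf{(P1)}--\textbf{(P2)}, Theorem \ref{main} applies to both perturbations. I would therefore produce two $\mathbb{Z}^{+}$-topological equivalences:
\begin{displaymath}
(H_{f},G_{f})\colon (\ref{lineal})\longleftrightarrow y_{k+1}=A(k)y_{k}+f(k,y_{k}),\qquad
(H_{g},G_{g})\colon (\ref{lineal})\longleftrightarrow y_{k+1}=A(k)y_{k}+g(k,y_{k}),
\end{displaymath}
each pair of maps satisfying the three conditions of Definition \ref{TopEq}.

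The candidate equivalence between the two nonlinear systems is the composition
\begin{displaymath}
\widetilde{H}(k,u):=H_{g}(k,G_{f}(k,u)),\qquad \widetilde{G}(k,u):=H_{f}(k,G_{g}(k,u)).
\end{displaymath}
I would then verify the three items of Definition \ref{TopEq} in turn. For (i): if $y(\cdot)$ solves the $f$-perturbed system, then $G_{f}(\cdot,y(\cdot))$ solves $(\ref{lineal})$ by Theorem \ref{main}, and applying $H_{g}$ to it yields a solution of the $g$-perturbed system, again by Theorem \ref{main}; the same chain works in reverse for $\widetilde{G}$. For (iii): since $u\mapsto G_{f}(k,u)$ and $u\mapsto H_{g}(k,u)$ are homeomorphisms of $\mathbb{R}^{d}$ for each fixed $k$, so is their composition, and the identities $\widetilde{G}(k,\widetilde{H}(k,u))=u=\widetilde{H}(k,\widetilde{G}(k,u))$ follow from the analogous identities for the two pairs.

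The only delicate verification is (ii), namely that $\widetilde{H}(k,u)-u$ is bounded on $\mathbb{Z}^{+}\times\mathbb{R}^{d}$. The natural decomposition
\begin{displaymath}
\widetilde{H}(k,u)-u=\bigl(H_{g}(k,G_{f}(k,u))-G_{f}(k,u)\bigr)+\bigl(G_{f}(k,u)-u\bigr)
\end{displaymath}
makes this transparent: by property (ii) applied to $(H_{g},G_{g})$ the first bracket is bounded uniformly in $(k,v)$ with $v=G_{f}(k,u)$, and by property (ii) applied to $(H_{f},G_{f})$ the second bracket is bounded uniformly in $(k,u)$. An identical argument controls $\widetilde{G}(k,u)-u$. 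This is the step where one must be careful: the uniform bound in property (ii) is needed precisely because $G_{f}(k,u)$ ranges over all of $\mathbb{R}^{d}$ as $u$ does, so a merely pointwise-in-$u$ bound would not suffice; fortunately Definition \ref{TopEq}(ii) already gives uniformity in both variables, so no additional work is required. With (i)--(iii) established, the two nonlinear systems are $\mathbb{Z}^{+}$-topologically equivalent via $(\widetilde{H},\widetilde{G})$, proving the corollary.
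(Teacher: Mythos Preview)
Your proof is correct and follows exactly the approach the paper takes: the paper states only that the result ``follows immediately after the fact that the topological equivalence is an equivalence relation,'' and you have carried out the transitivity verification in detail by composing the two equivalences produced by Theorem \ref{main}. Your explicit check of item (ii) via the telescoping decomposition is precisely what is needed to justify that the composition still satisfies Definition \ref{TopEq}, which the paper leaves implicit.
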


A second byproduct of Theorem \ref{main} considers the case where a projector $P(n)=I$ for any $\mathbb{Z}^{+}$, that is, the system (\ref{lineal}) is a nonuniform contraction. Nevertheless this result has interest on itself ans should be treated separately.

\begin{corollary}
If the properties \textnormal{\textbf{(P1)}}, \textnormal{\textbf{(P3)}}, \textnormal{\textbf{(P6)}} 
and
\begin{itemize}
\item[{\bf (S1)}] There exists a bounded sequence $\rho$ and a decreasing sequence $h$ convergent to zero with $h(0)=1$ such that:
    \begin{displaymath}
\begin{array}{rl}
    ||\Phi(k,n)||\leq \displaystyle \rho(n) \frac{h(k)}{h(n)}, & \forall k\geq n \geq 0.
    \end{array}
    \end{displaymath}
    \item[{\bf{(S2)}}] The sequences $\rho$ and $h$ defined above verify 
\begin{displaymath}
     \sum\limits_{j=0}^{k-1}\mu(j)\rho(j+1)\frac{h(k)}{h(j+1)}<\infty \quad \textnormal{for any $k\in \mathbb{Z}^{+}$}.
\end{displaymath}

\item[{\bf{(S3)}}] The sequences $\rho$ and $h$ defined above verify 
\begin{displaymath}
     \sum\limits_{j=0}^{k-1}\gamma(j)\rho(j+1)\frac{h(k)}{h(j+1)}:=q <1 \quad \textnormal{for any $k\in \mathbb{Z}^{+}$}.
\end{displaymath}
\end{itemize}
are satisfied then the systems (\ref{lineal}) and (\ref{nolinealsystem}) are  $\mathbb{Z}^{+}$--topologically equivalent with
\begin{equation}
\label{homeo-G2}
\left\{\begin{array}{rcl}
G(k,\xi)&=&x(k,0,y(0,k,\xi))=\Phi(k,0)y(0,k,\xi),\\\\
H(k,\xi)&=&y(k,0,x(0,k,\xi)).
\end{array}\right.
\end{equation}
\end{corollary}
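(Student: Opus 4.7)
The plan is to reduce the Corollary to Theorem \ref{main} by observing that the degenerate choice $P(n)\equiv I$, $Q(n)\equiv 0$ converts (S1)--(S3) into the hypotheses (P2), (P4), (P5), and then to check that the general formulas (\ref{Homeo-H})--(\ref{Homeo-G}) collapse to the explicit expressions (\ref{homeo-G2}). The remaining hypotheses (P1), (P3), (P6) are inherited verbatim from the statement.

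First I would verify the hypotheses of Theorem \ref{main}. With $P(n)=I$ and $Q(n)=0$, the Green's function reduces to $\mathcal{G}(\ell,j+1)=\Phi(\ell,j+1)$ when $j<\ell$ and $\mathcal{G}(\ell,j+1)=0$ when $j\geq\ell$, so the infinite series defining $N(\ell,\mu)$ and $N(\ell,\gamma)$ truncate to the finite sums appearing in (S2) and (S3). Combining this with (S1) (which is exactly (P2) for the degenerate projectors) shows that (P4) holds with a finite $p$ and that (P5) holds with the constant $q<1$ supplied by (S3). Theorem \ref{main} then produces a $\mathbb{Z}^{+}$-topological equivalence realized by the maps $G$ and $H$ of (\ref{Homeo-H})--(\ref{Homeo-G}).

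Next I would derive the explicit form of $G$ in (\ref{homeo-G2}). Starting from the alternative description (\ref{homeo-G1}), $G(k,\eta)=\Phi(k,0)\{y(0,k,\eta)+w^{*}(0;(k,\eta))\}$, it suffices to show that $w^{*}(0;(k,\eta))=0$. Evaluating (\ref{map1}) at $k=0$ leaves an empty $P$-sum plus a $Q$-sum, and both vanish under $Q\equiv 0$. Hence $G(k,\eta)=\Phi(k,0)\,y(0,k,\eta)=x(k,0,y(0,k,\eta))$, as claimed; the backward continuation $y(0,k,\eta)$ is well defined by (P6) and Remark \ref{R6}.

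Finally I would derive the form of $H$. Evaluating the fixed-point formula (\ref{FP}) at $k=0$ gives $z^{*}(0;(0,\xi))=0$ by the same argument, so $H(0,\cdot)$ is the identity on $\mathbb{R}^{d}$. Applying the intertwining identity $H[k,x(k,m,\xi)]=y(k,m,H(m,\xi))$ from (\ref{identity-HY}) with $m=0$ and then substituting $\xi=x(0,k,\eta)$, for which $x(k,0,\xi)=\eta$ by (\ref{semiflujo-lineal}), yields $H(k,\eta)=y(k,0,x(0,k,\eta))$. No genuine obstacle is expected: the whole argument amounts to tracking how the assumption $Q\equiv 0$ kills the backward-tail contributions in $w^{*}$ and $z^{*}$ and thereby reduces $H(0,\cdot)$ to the identity, after which the formulas of (\ref{homeo-G2}) drop out of identities already established inside the proof of Theorem \ref{main}.
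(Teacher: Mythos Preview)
Your proposal is correct and follows essentially the same route as the paper: specialize Theorem \ref{main} to $P\equiv I$, $Q\equiv 0$ so that (S1)--(S3) become (P2), (P4), (P5), and then read off $G$ from (\ref{homeo-G1}) using $w^{*}(0;\cdot)=0$. The only cosmetic difference is in deriving $H$: the paper inverts the explicit $G$ (from $G(k,H(k,\xi))=\xi$ it gets $y(0,k,H(k,\xi))=x(0,k,\xi)$ and then applies the cocycle identity for $y$), whereas you first observe $H(0,\cdot)=\mathrm{id}$ and then apply the intertwining identity from (\ref{identity-HY}).
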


\begin{proof}
The topological equivalence is immediate since \textbf{(S1)},\textbf{(S2)} and \textbf{(S3)} are particular cases of \textbf{(P2)},\textbf{(P4)} and \textbf{(P5)} respectively. Nevertheless, we can gain more insight about $G$ and $H$. In fact, as $Q(n)=0$ for any $n\in \mathbb{Z}^{+}$
we have the identity (\ref{homeo-G1}). Moreover, we can easily prove that $\xi\mapsto G(k,\xi)$ is a bijection for any $k\in \mathbb{Z}^{+}$. In fact, the injectiveness
is a straightforward consequence of the uniqueness of solutions. On the other hand, given an arbitrary $z\in \mathbb{R}^{d}$ it is easy to see that $G(k,\xi)=z$
 with $\xi=y(k,0,\Phi(0,k)z)$ and the surjectiveness follows.

As we know that
\begin{equation}
\label{inversaH}
H(k,\xi)=G^{-1}(k,\xi)
\end{equation}
for any $k\in \mathbb{Z}^{+}$, from $(\ref{Homeo-G})$ we have that
$$
G(k,H(k,\xi))=\Phi(k,0)y(0,k,H(k,\xi))=\xi
$$
or equivalently
\begin{displaymath}
y(0,k,H(k,\xi))=\Phi(0,k)\xi=x(0,k,\xi).
\end{displaymath}

In addition, from $(\ref{identity-HY})$ combined with the identity above
we have that
$$
H(k,\xi)=y(k,0,y(0,k,H(k,\xi))=y(k,0,x(0,k,\xi))
$$ 
and the result follows.
\end{proof}

\begin{remark}
We point out the remarkable simplicity of the identities (\ref{homeo-G2}) compared with previous references \cite{Castaneda2016,Reinfelds1997,Reinfelds}. In addition, we emphasize its novelty for the discrete case.

In addition, if we assume that for any fixed $k\in \mathbb{Z}^{+}$ the 
map $x \mapsto f(k,x)$ and its derivatives up to order $r$--th are continuous with $r\geq 1$. It can be proved that the map $\xi \mapsto G(k,\xi)$ is of class $C^{r}$ for any fixed $k\in \mathbb{Z}^{+}$.

In fact by using (\ref{dif-1}), it can be proved that
$\xi \mapsto \frac{\partial y}{\partial \xi}(n,k,\xi)$ is well defined for any $0\leq n< k-1$ and is solution of the matrix difference equation
\begin{displaymath}
\left\{\begin{array}{rcl}
z_{n+1}&=&[A(n)+Df(n,y(n,k,\mu))]z_{n}\\\\
z_{k} &=& I
\end{array}\right.
\end{displaymath}
and by using the first identity of (\ref{homeo-G2}) we have that 
$$
\frac{\partial G}{\partial \xi}(k,0)=\Phi(k,0)\frac{\partial y}{\partial \xi}(0,k,\xi) \quad \textnormal{for any $k\in \mathbb{Z}^{+}$}
$$
and the higher order derivatives can be computed directly from the above identity.
\end{remark}

\section{Topological equivalence and asymptotic stability}
This section is focused in the special case that the linear system (\ref{lineal}) is asymptotically stable (not necessarily uniform) and studies the preservation of stability by topological equivalence.

\begin{definition}
The solution $y^{*}$ is an equilibrium of the nonlinear system $(\ref{nolinealsystem})$ if
\begin{equation}
\label{equilibrium}
y^{*}=A(n)y^{*}+f(n,y^{*}) \quad \textnormal{for any $n\in \mathbb{Z}^{+}$}.
\end{equation}
\end{definition}

\begin{theorem}
\label{preservacion}
If the properties \textnormal{\textbf{(P1)}},\textnormal{\textbf{(P3)}},\textnormal{\textbf{(P6)}},\textnormal{\textbf{(S1)}--\textbf{(S3)}} 
and
\begin{itemize}
\item[{\bf{(S4)}}] The sequences $\rho$ and $h$ defined above verify 
$$
\lim\limits_{k\to +\infty}
h(k)
\prod\limits_{j=0}^{k-1}\left(1+\gamma(j) \rho(j+1)\frac{h(j)}{h(j+1)}\right)=0
$$
\end{itemize}
are satisfied then
\begin{itemize}
    \item[(i)] If the system (\ref{nolinealsystem}) has an equilibrium $y^{*}$, then it is unique,
    \item[(ii)] If $y^{*}=0$, that is $f(n,0)=0$ for any $n\in \mathbb{Z}^{+}$. Then:
    $$
    H(k,0)=G(k,0)=0 \quad \textnormal{for any $k\in \mathbb{Z}^{+}$},
    $$
    \item[(iii)] If $y^{*}\neq 0$, then 
    \begin{displaymath}
    \lim\limits_{k\to +\infty}H(k,0)=y^{*} \quad \textnormal{and} \quad \lim\limits_{k\to +\infty}G(k,y^{*})=0.
    \end{displaymath}
\end{itemize}

\end{theorem}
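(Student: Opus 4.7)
The plan is to extract from (S4) a global attractivity estimate for the nonlinear system (\ref{nolinealsystem}), and then combine it with the identities established in the proof of Theorem \ref{main} to derive the three statements. First I would prove that any pair of forward solutions $y_1,y_2$ of (\ref{nolinealsystem}) satisfies
\begin{displaymath}
|y_1(k)-y_2(k)|\leq \rho(0)\,|y_1(0)-y_2(0)|\,h(k)\prod_{j=0}^{k-1}\left(1+\rho(j+1)\gamma(j)\frac{h(j)}{h(j+1)}\right),
\end{displaymath}
so that by (S4) we conclude $|y_1(k)-y_2(k)|\to 0$ as $k\to+\infty$. The starting point is the discrete variation-of-constants formula; (S1) and (P3) estimate the linear and the Lipschitz parts, then dividing through by $h(k)$ and applying the discrete Gronwall inequality to $u(k)=|y_1(k)-y_2(k)|/h(k)$ yields the claim. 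With this estimate at hand, (i) is immediate: two equilibria $y^{*},\tilde y^{*}$ produce the constant solutions $k\mapsto y^{*}$ and $k\mapsto\tilde y^{*}$ whose mutual distance must tend to zero, so they coincide.

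For (ii), note that $f(n,0)=0$ makes $y\equiv 0$ a solution of (\ref{nolinealsystem}), so $y(j,m,0)\equiv 0$ and hence (\ref{map1}) gives $w^{*}(k;(m,0))=0$ directly. Similarly, $\phi\equiv 0$ solves the fixed-point problem defining $z^{*}(\cdot;(m,0))$ in (\ref{FP}), since $f(j,x(j,m,0)+0)=f(j,0)=0$; by the uniqueness supplied by the contraction argument in Step 1 of the proof of Theorem \ref{main}, $z^{*}(\cdot;(m,0))\equiv 0$. Substituting $\xi=\eta=0$ and $m=k$ in (\ref{Homeo-H}) and (\ref{Homeo-G}) yields $H(k,0)=G(k,0)=0$ for every $k\in\mathbb{Z}^{+}$.

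For (iii), since $y(k,0,y^{*})\equiv y^{*}$, the second line of (\ref{identity-GX}) with $m=0, \eta=y^{*}$ gives
\begin{displaymath}
G(k,y^{*})=G[k,y(k,0,y^{*})]=\Phi(k,0)G(0,y^{*}),
\end{displaymath}
and (S1) combined with $h(k)\to 0$ produces $|G(k,y^{*})|\leq \rho(0)h(k)|G(0,y^{*})|\to 0$. For the limit $H(k,0)\to y^{*}$, the identity $H[k,x(k,m,\xi)]=y(k,m,H(m,\xi))$ in (\ref{identity-HY}) with $m=\xi=0$ realises $H(k,0)=y(k,0,H(0,0))$ as a solution of (\ref{nolinealsystem}) with initial value $H(0,0)$; applying the attractivity estimate from the first paragraph to the pair $y_1(k)=H(k,0)$ and $y_2(k)\equiv y^{*}$ delivers $|H(k,0)-y^{*}|\to 0$. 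The only step with any technical subtlety is the Gronwall derivation needed to exploit (S4); everything else is a direct unwinding of identities already proved in Theorem \ref{main}.
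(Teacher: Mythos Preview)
Your proposal is correct and uses the same engine as the paper---variation of constants, the bound from \textbf{(S1)}, the Lipschitz bound from \textbf{(P3)}, division by $h(k)$, and the discrete Gronwall inequality to produce the factor in \textbf{(S4)}. The difference is organizational: you isolate a single attractivity estimate for arbitrary pairs of solutions of (\ref{nolinealsystem}) and then apply it uniformly to (i), (ii) and (iii), whereas the paper treats the parts separately. In particular, the paper proves (i) without Gronwall or \textbf{(S4)} at all, using only \textbf{(S3)}: from $|y^{*}-\bar y|\leq \rho(0)h(n)|y^{*}-\bar y|+q|y^{*}-\bar y|$ one lets $n\to\infty$ and obtains $1\leq q$, a contradiction. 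For (ii) the paper invokes the explicit formula $G(k,\eta)=\Phi(k,0)y(0,k,\eta)$ from (\ref{homeo-G2}) and then $H=G^{-1}$, while you argue directly that $w^{*}=z^{*}=0$; both are immediate. For (iii) the paper's direct Gronwall estimate on $|H(k,0)-y^{*}|$ is exactly your attractivity lemma specialised to the pair $y(k,0,H(0,0))$ and $y^{*}$ (note $H(0,0)=0$ since $Q=0$ makes $z^{*}(0;(0,\xi))$ an empty sum, so the constants match). Your packaging is a bit cleaner; the paper's handling of (i) is a bit cheaper.
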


\begin{proof}
Ir order to prove (i) notice that if $y^{*}$ and $\bar{y}$ are equilibria of (\ref{nolinealsystem}) then it follows that
$y(n,0,y^{*})=y^{*}$ and $y(n,0,\bar{y})=\bar{y}$ for any $n\in \mathbb{Z}^{+}$, this is equivalent to:
\begin{displaymath}
y^{*}-\bar{y}=\Phi(n,0)(y^{*}-\bar{y})+\sum\limits_{k=0}^{n-1}\Phi(n,k+1)\{f(k,y^{*})-f(k,\bar{y})\} \quad \textnormal{for any $n\in \mathbb{Z}^{+}$}.
\end{displaymath}

Then, it follows by \textbf{(S1)} and \textbf{(S3)} that
\begin{displaymath}
\begin{array}{rcl}
|y^{*}-\bar{y}| & \leq & ||\Phi(n,0)|||y^{*}-\bar{y}|+\sum\limits_{k=0}^{n-1}||\Phi(n,k+1)|||f(k,y^{*})-f(k,\bar{y})| \\\\
&\leq  & \displaystyle \rho(0)h(n)|y^{*}-\bar{y}|+\sum\limits_{k=0}^{n-1}\gamma(k)\rho(k+1)\frac{h(n)}{h(k+1)} |y^{*}-\bar{y}| \\\\
&\leq  & \displaystyle  \rho(0)h(n)|y^{*}-\bar{y}|+q|y^{*}-\bar{y}|.
\end{array}
\end{displaymath}

We will see that $y^{*}=\bar{y}$. Indeed otherwise $|y^{*}-\bar{y}|\neq 0$ which combined with the above inequality leads to
\begin{displaymath}
1\leq  \rho(0)h(n)+q \quad \textnormal{for any $n\in\mathbb{Z}^{+}$}.
\end{displaymath}

Now, letting $n\to +\infty$ and using \textbf{(S1)} we obtain a contradiction  with \textbf{(S3)} and the uniqueness of the equilibrium follows. Subsequently from $(\ref{homeo-G1})$ we have that
$$
G(k,y^{*})=\Phi(k,0)y(0,k,y^{*})=\Phi(k,0)y^{*}.
$$

Thus if $y^{*}=0$ we have that $G(k,0)=0$. On the other hand, from the fact that $H(k,G(k,0))=0$ we can deduce that $H(k,0)=0$ for any $k\in \mathbb{Z}^{+}$ and thus (ii) has been proved.

Finally if $y^{*}\neq 0$ then
$$
|G(k,y^{*})|=|\Phi(k,0)y^{*}|\leq ||\Phi(k,0)||\, |y^{*}|\leq \rho(0)h(k)|y^{*}|, \quad \forall k\in \mathbb{Z}^{+} 
$$
Thus, letting $k\to +\infty$ we conclude that $\lim\limits_{k\to +\infty} G(k,y^{*})=0$.

Now from $(\ref{Homeo-H})$ combined with the fact that $y^{*}$ is an equilibrium we have that
\begin{eqnarray*}
|H(k,0)-y^{*}|&\leq &|\Phi(k,0)y^{*}|+\sum_{j=0}^{k-1}|\Phi(j,k+1)|\, |f(j,z^{*}(j;(k,0))+x(j,k,0))-f(j,y^{*})|,\\
              &\leq & |\Phi(k,0)y^{*}|+\sum_{j=0}^{k-1}\gamma(j) |\Phi(k,j+1)|\, |z^{*}(j;(k,0))+x(j,k,0)-y^{*}|,\\
              &\leq & ||\Phi(k,0)||\,|y^{*}|+\sum_{j=0}^{k-1}\gamma(j) |\Phi(k,j+1)|\, |H(j,0)-y^{*}|,\\
              &\leq & \rho(0)h(k) |y^{*}|+\sum_{j=0}^{k-1}\gamma(j)\rho(j+1)\frac{h(k)}{h(j+1)}|H(j,0)-y^{*}|,\\
\end{eqnarray*}
which is the equivalent to
\begin{displaymath}
\begin{array}{rcl}
\frac{1}{h(k)}|H(k,0)-y^{*}|  &\leq &  \displaystyle \rho(0) |y^{*}|+\sum\limits_{j=0}^{k-1}\gamma(j) \rho(j+1)\frac{1}{h(j+1)}|H(j,0)-y^{*}|,\\\\
 &\leq & \displaystyle \rho(0) |y^{*}|+\sum\limits_{j=0}^{k-1}\gamma(j) \rho(j+1)\frac{h(j)}{h(j+1)}\frac{1}{h(j)}|H(\ell-1,0)-y^{*}|,
\end{array}
\end{displaymath}

Later if $W(k)=\frac{1}{h(k)}|H(k,0)-y^{*}|$ then
$$
W(k)\leq \rho(0)|y^{*}|+\sum\limits_{j=0}^{k-1}\gamma(j) \rho(j+1)\frac{h(j)}{h(j+1)}W(j).
$$

Now from the discrete Gronwall inequality (see for example \cite{Elaydi,Popenda}) we have that
$$
W(k)\leq \rho(0)|y^{*}|
\prod\limits_{j=0}^{k-1}\left(1+\gamma(j) \rho(j+1)\frac{h(j)}{h(j+1)}\right).
$$

Thus
$$
|H(k,0)-y^{*}|\leq \rho(0)|y^{*}|\,h(k)
\prod\limits_{j=0}^{k-1}\left(1+\gamma(j) \rho(j+1)\frac{h(j)}{h(j+1)}\right).
$$

Therefore, the property (iii) follows from \textbf{(S4)}.
\end{proof}

\begin{corollary}
Under the assumptions of Theorem \ref{preservacion}, if the nonlinear system
(\ref{nolinealsystem}) has an equilibrium $y^{*}$, then is asymptotically stable.
\end{corollary}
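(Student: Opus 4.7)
The plan is to prove this directly by a discrete Gronwall argument, essentially transferring the calculation already performed at the end of the proof of Theorem~\ref{preservacion}(iii) from the curve $k\mapsto H(k,0)$ to an arbitrary trajectory $k\mapsto y(k,0,\eta)$. Using the full force of topological equivalence here would require some uniform continuity of $\xi\mapsto H(k,\xi)$ near $0$ with respect to $k$, which is not immediately available; by contrast, the variation of constants formula together with \textbf{(S1)}, \textbf{(P3)} and \textbf{(S4)} yields both stability and attractivity in one stroke.

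First I would subtract the equilibrium identity $y^{*}=A(k)y^{*}+f(k,y^{*})$ from the recursion defining $y(\cdot,0,\eta)$ to obtain, for $\tilde{y}(k):=y(k,0,\eta)-y^{*}$, the perturbed linear recursion $\tilde{y}(k+1)=A(k)\tilde{y}(k)+\bigl[f(k,y(k,0,\eta))-f(k,y^{*})\bigr]$. The discrete variation of constants formula then gives
\begin{displaymath}
\tilde{y}(k)=\Phi(k,0)(\eta-y^{*})+\sum_{j=0}^{k-1}\Phi(k,j+1)\bigl[f(j,y(j,0,\eta))-f(j,y^{*})\bigr].
\end{displaymath}
Taking norms and applying \textbf{(S1)} together with the Lipschitz bound from \textbf{(P3)} produces the scalar inequality
\begin{displaymath}
|\tilde{y}(k)|\leq \rho(0)\,h(k)\,|\eta-y^{*}|+\sum_{j=0}^{k-1}\gamma(j)\rho(j+1)\frac{h(k)}{h(j+1)}|\tilde{y}(j)|.
\end{displaymath}

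Next, setting $W(k):=|\tilde{y}(k)|/h(k)$ and dividing by $h(k)$, one recognizes the exact Gronwall setup used in the proof of Theorem~\ref{preservacion}(iii):
\begin{displaymath}
W(k)\leq \rho(0)|\eta-y^{*}|+\sum_{j=0}^{k-1}\gamma(j)\rho(j+1)\frac{h(j)}{h(j+1)}W(j),
\end{displaymath}
so the discrete Gronwall inequality yields
\begin{displaymath}
|y(k,0,\eta)-y^{*}|\leq \rho(0)|\eta-y^{*}|\,h(k)\prod_{j=0}^{k-1}\left(1+\gamma(j)\rho(j+1)\frac{h(j)}{h(j+1)}\right).
\end{displaymath}

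Finally, hypothesis \textbf{(S4)} says precisely that the factor $h(k)\prod_{j=0}^{k-1}(1+\gamma(j)\rho(j+1)h(j)/h(j+1))$ tends to $0$, hence is uniformly bounded by some constant $K>0$. Stability then follows by choosing $\delta=\varepsilon/(\rho(0)K)$ for given $\varepsilon>0$, while attractivity $y(k,0,\eta)\to y^{*}$ is immediate from the same bound. There is no essential obstacle: the argument is a clean replay of the Gronwall step already established in Theorem~\ref{preservacion}, with the equilibrium identity serving as the anchor that lets the Lipschitz constant $\gamma$ absorb the nonlinear term.
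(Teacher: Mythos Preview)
Your argument is correct and is genuinely different from the paper's route. The paper proves the corollary by transporting through the topological equivalence: it writes $y(k,j,\eta)=H[k,x(k,j,\xi)]$ with $\eta=H(j,\xi)$, splits $|y(k,j,\eta)-y^{*}|\leq |H[k,x(k,j,\xi)]-H[k,0]|+|H[k,0]-y^{*}|$, and then invokes $x(k,j,\xi)\to 0$ together with continuity of $u\mapsto H(k,u)$ for the first term and Theorem~\ref{preservacion}(iii) for the second. You instead bypass the maps $H,G$ entirely and repeat the Gronwall computation of Theorem~\ref{preservacion}(iii) with $y^{*}$ as anchor, which yields the explicit decay estimate $|y(k,0,\eta)-y^{*}|\leq \rho(0)|\eta-y^{*}|\,h(k)\prod_{j=0}^{k-1}\bigl(1+\gamma(j)\rho(j+1)h(j)/h(j+1)\bigr)$. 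Your approach is more elementary and, as you correctly observe, sidesteps the uniformity question lurking in the paper's argument (the continuity modulus of $H(k,\cdot)$ at $0$ depends on $k$, so the step ``$x(k,j,\xi)\to 0$ plus continuity of $H(k,\cdot)$ gives $H[k,x(k,j,\xi)]-H[k,0]\to 0$'' is not fully justified there). What the paper's approach buys, in principle, is the conceptual message that topological equivalence transports asymptotic stability; what your approach buys is a clean quantitative bound and a self-contained proof. One small remark: you work only from initial time $m=0$, whereas asymptotic stability for nonautonomous systems is usually stated for arbitrary initial times $m\in\mathbb{Z}^{+}$; the same computation with $\Phi(k,m)$ in place of $\Phi(k,0)$ handles this, and \textbf{(S4)} still forces the resulting product to vanish.
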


\begin{proof}
We will only consider the case $y^{*}\neq 0$ since the other one is simpler. Let
$k\mapsto y(k,j,\eta)$ be a solution of (\ref{nolinealsystem}). By using 
identity (\ref{identity-HY}) we know that there exists
a unique $\xi\in \mathbb{R}^{d}$ such that $H(j,\xi)=\eta$ and $y(k,j,\eta)=H[k,x(k,j,\xi)]$. Now, 
notice that
$$
|y(k,j,\eta)-y^{*}|=|H[k,x(k,j,\xi)]-y^{*}|\leq |H[k,x(k,j,\xi)]-H[k,0]|+|H[k,0]-y^{*}|
$$
and the result follows by proving that the two right terms are small enough for big values of $k$. 

First of all, by using \textbf{(S1)} we know that any 
solution $k\mapsto x(k,j,\xi)$ of the linear system (\ref{lineal}) verifies the
asymptotic behavior $\lim\limits_{k\to +\infty}x(k,j,\xi)=0$. 

The above limit combined with the continuity of the
map $x \mapsto H(k,x)$ for any fixed $k$ implies that $|H[k,x(k,j,\xi)]-H[k,0]|$ 
is arbitrarily small for any fixed (but big enough) value of $k$. 

Moreover, by using the statement (iii) of Theorem \ref{preservacion} we have the asymptotic behavior
$\lim\limits_{k\to +\infty}H(k,0)=y^{*}$ and the result follows.
\end{proof}

In the particular case that the linear system (\ref{lineal}) is uniformly asymptotically stable,
namely, it has and exponential dichotomy on $\mathbb{Z}^{+}$ with projectors $P(n)=I$ and $Q(n)=0$ (see Remark \ref{stabilite}), the assumption \textbf{(S4)} can be dropped and the above result becomes simpler:
\begin{corollary}
Assume that the properties \textnormal{\textbf{(P1)}} 
and \textnormal{\textbf{(P6)}} are verified, the property \textnormal{\textbf{(S2)}}
is satisfied with $\rho(n):=K>0$ and $h(n)=\theta^{n}$ with $\theta\in (0,1)$ and the property \textnormal{\textbf{(P3)}}
is verified constants sequences $\gamma(n):=\gamma$ and $\mu(n):=\mu$ which satisfy
\begin{equation}
\label{mu+exp}
\frac{\mu K}{1-\theta}<\infty   \quad \textnormal{and} \quad \frac{\gamma K}{1-\theta}<q<1
\end{equation}
then we have that
\begin{itemize}
    \item[(i)] If the system (\ref{nolinealsystem}) has an equilibrium $y^{*}$, then it is unique,
    \item[(ii)] If $y^{*}=0$, that is $f(n,0)=0$ for any $n\in \mathbb{Z}^{+}$. Then:
    $$
    H(k,0)=G(k,0)=0 \quad \textnormal{for any $k\in \mathbb{Z}^{+}$},
    $$
    \item[(iii)] If $y^{*}\neq 0$, then 
    \begin{displaymath}
    \lim\limits_{k\to +\infty}H(k,0)=y^{*} \quad \textnormal{and} \quad \lim\limits_{k\to +\infty}G(k,y^{*})=0.
    \end{displaymath}
\end{itemize}
\end{corollary}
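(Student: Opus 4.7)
The plan is to derive the corollary as a direct consequence of Theorem \ref{preservacion} by verifying that all the specialized hypotheses imply the assumptions \textbf{(P1)}, \textbf{(P3)}, \textbf{(P6)}, \textbf{(S1)}--\textbf{(S4)} required there. The only genuinely new verification is \textbf{(S4)}; the others follow by routine substitution.

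First I would note that \textbf{(P1)}, \textbf{(P3)}, \textbf{(P6)} are carried over verbatim (with $\gamma(n)\equiv \gamma$, $\mu(n)\equiv \mu$), while \textbf{(S1)} is the uniform exponential dichotomy with $P(n)=I$ written in the form $\|\Phi(k,n)\|\le K\theta^{k-n}$. To verify \textbf{(S2)}, observe that
\begin{displaymath}
\sum_{j=0}^{k-1}\mu\,K\,\frac{\theta^{k}}{\theta^{j+1}}=\mu K\sum_{i=0}^{k-1}\theta^{i}\le \frac{\mu K}{1-\theta}<\infty,
\end{displaymath}
by the first inequality in (\ref{mu+exp}). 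The same computation with $\gamma$ in place of $\mu$ gives
\begin{displaymath}
\sum_{j=0}^{k-1}\gamma\,K\,\frac{\theta^{k}}{\theta^{j+1}}\le \frac{\gamma K}{1-\theta}<q<1,
\end{displaymath}
which is exactly \textbf{(S3)}.

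The crucial step is \textbf{(S4)}. Substituting the specific constants and using $h(j)/h(j+1)=1/\theta$, one computes
\begin{displaymath}
h(k)\prod_{j=0}^{k-1}\left(1+\gamma(j)\rho(j+1)\frac{h(j)}{h(j+1)}\right)=\theta^{k}\prod_{j=0}^{k-1}\left(1+\frac{\gamma K}{\theta}\right)=(\theta+\gamma K)^{k}.
\end{displaymath}
From the bound $\gamma K/(1-\theta)<q<1$ in (\ref{mu+exp}) we obtain $\theta+\gamma K<1$, so $(\theta+\gamma K)^{k}\to 0$ as $k\to +\infty$, which is precisely \textbf{(S4)}.

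Once all the assumptions have been verified, the conclusions (i), (ii), (iii) are immediate from the corresponding statements of Theorem \ref{preservacion}. I do not expect any serious obstacle, since the whole proof is a specialization of hypotheses already handled in the general result; the only subtle point is recognising that the second inequality in (\ref{mu+exp}) provides precisely the strict bound $\theta+\gamma K<1$ that forces the Gronwall-type product appearing in \textbf{(S4)} to decay geometrically.
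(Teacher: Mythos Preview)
Your proposal is correct and follows essentially the same route as the paper: reduce the corollary to Theorem~\ref{preservacion} by checking that the specialized constants $\rho(n)=K$, $h(n)=\theta^{n}$, $\gamma(n)=\gamma$, $\mu(n)=\mu$ yield \textbf{(S2)}, \textbf{(S3)} and \textbf{(S4)}, the last via the identity $h(k)\prod_{j=0}^{k-1}(1+\gamma K/\theta)=(\theta+\gamma K)^{k}$ together with $\theta+\gamma K<1$ from (\ref{mu+exp}). The paper's proof is slightly terser (it declares \textbf{(S2)}--\textbf{(S3)} ``straightforward'' and only spells out the \textbf{(S4)} computation), but the argument is the same.
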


\begin{proof}
It is straightforward to see that the inequalities (\ref{mu+exp}) implies
\textbf{(S2)} and \textbf{(S3)}. Now, we will see that (\ref{mu+exp}) also implies
\textbf{(S4)}.

In fact, notice that the right inequality of (\ref{mu+exp}) implies that $0<\theta+\gamma K<1$.
Moreover we can see that
\begin{eqnarray*}
h(k)\prod_{j=0}^{k-1}1+\gamma(j)\rho(j+1)\frac{h(j)}{h(j+1)}&=&\theta^{k}\prod_{j=0}^{k-1}\left(1+\frac{K\gamma}{\theta}\right)\\\\
&=& \theta^{k}\prod\limits_{j=0}^{k-1}\frac{\theta+\gamma K}{\theta}\\\\
&=& (\theta+\gamma K)^{k}
\end{eqnarray*}
and {\bf (S4)} follows since  $0<\theta+\gamma K<1$.
\end{proof}

\end{document}